\def\norm#1{\left\|#1\right\|}
\theoremstyle{plain}
\newtheorem{theorem}{Theorem}[section]
\newtheorem{lemma}[theorem]{Lemma}
\newtheorem{proposition}[theorem]{Proposition}
\theoremstyle{remark}
\newtheorem{remark}[theorem]{Remark}
\numberwithin{equation}{section}
\def \d {\mathrm{d}}
\title[Numerical identification of initial temperatures] 
      {Numerical identification of initial temperatures in heat equation with dynamic boundary conditions}
\author{S. E. Chorfi}
\author{G. El Guermai}
\author{L. Maniar}
\author{W. Zouhair}
\address{S. E. Chorfi, G. El Guermai, L. Maniar and W. Zouhair, Cadi Ayyad University, Faculty of Sciences Semlalia, LMDP, UMMISCO (IRD-UPMC), B.P. 2390, Marrakesh, Morocco}
\email{chorphi@gmail.com, ghita.el.guermai@gmail.com, maniar@uca.ma, walid.zouhair.fssm@gmail.com}
\subjclass[2020]{Primary: 35R30, 49N45; Secondary: 35K05, 47A05.}
 \keywords{inverse problem, backward parabolic problem, dynamic boundary condition, adjoint problem, conjugate gradient.}
\begin{document}
\begin{abstract}
We investigate the inverse problem of numerically identifying unknown initial temperatures in a heat equation with dynamic boundary conditions whenever some overdetermination data is provided after a final time. This is a backward parabolic problem which is severely ill-posed. As a first step, the problem is reformulated as an optimization problem with an associated cost functional. Using the weak solution approach, an explicit formula for the Fr\'echet gradient of the cost functional is derived from the corresponding sensitivity and adjoint problems. Then the Lipschitz continuity of the gradient is proved. Next, further spectral properties of the input-output operator are established. Finally, the numerical results for noisy measured data are performed using the regularization framework and the conjugate gradient method. We consider both one- and two-dimensional numerical experiments using finite difference discretization to illustrate the efficiency of the designed algorithm. Aside from dealing with a time derivative on the boundary, the presence of a boundary diffusion makes the analysis more complicated. This issue is handled in the 2-D case by considering the polar coordinate system. The presented method implies fast numerical results.
\end{abstract}

\maketitle

\section{Introduction}
Inverse problems arise in various fields of science and engineering, including the modeling of physical phenomena such as heat transfer problems and diffusion processes, among others. Most of these problems are modeled by PDEs where some parameters are unknown and need to be identified from some measured data. This is exactly where the inverse problems theory comes in. More precisely, inverse problems theory consists of identifying some input parameters in a given system from a partial measurement on the solution, e.g., forcing terms \cite{Ha'07, Ha'11, Hun'21}, thermal conductivity and radiative coefficient \cite{Ha'21}, potential, damping coefficient and source terms \cite{KAMC, KAMC19, CGMZ'22}, or initial temperature \cite{LMZ'09}. Up to our knowledge, the most practical and realistic measurements considered in literature use the state at a fixed final time.

In this paper, we focus on identifying unknown initial temperatures in a heat equation with dynamic boundary conditions, from a noisy measurement of the final temperature. More precisely, let $T>0$ be a fixed final time and let $\Omega \subset \mathbb{R}^N$ be a bounded domain ($N\geq 1$ is an integer) with boundary $\Gamma=\partial\Omega$ of class $C^2$. We denote by $\Omega_T =(0,T)\times \Omega$ and $\Gamma_T =(0,T)\times \Gamma$. We consider the following heat equation with dynamic boundary conditions of surface diffusion type
\begin{empheq}[left = \empheqlbrace]{alignat=2}
\begin{aligned}
&\partial_t y -d \Delta y+a(x)y = 0, &&\qquad \text{in } \Omega_T , \\
&\partial_t y_{\Gamma} -\gamma \Delta_{\Gamma} y_{\Gamma}+d\partial_{\nu} y + b(x)y_{\Gamma} = 0, &&\qquad \text{on } \Gamma_T, \\
&y_{\Gamma}(t,x) = y_{|\Gamma}(t,x), &&\qquad \text{on } \Gamma_T, \\
&(y,y_{\Gamma})\rvert_{t=0}=(f, g),   &&\qquad \Omega\times\Gamma, \label{eq1to4}
\end{aligned}
\end{empheq}
where $\mathcal{G}:=(f, g)\in L^2(\Omega)\times L^2(\Gamma)$ is the unknown initial temperature. The diffusive constants are such that $d, \gamma > 0$ if $N\ge 2$, with the convention $\gamma=0$ if $N=1$. The radiative potentials are assumed to be bounded $a\in L^\infty(\Omega)$, $b\in L^\infty(\Gamma)$, and $D=\max(\|a\|_\infty,\|b\|_\infty)$. The Laplace operator is denoted by $\Delta=\Delta_x$. The trace of $y$ is $y_{\mid \Gamma}$, and the normal derivative is $\partial_{\nu} y:=(\nabla y \cdot \nu)_{|\Gamma}$, where $\nu(x)$ is the unit outward normal at $x\in \Gamma$. Let $\mathrm{g}$ be the standard Riemannian metric on $\Gamma$ induced by $\mathbb{R}^N$. We fix a coordinate system $x=(x^j)$ and we denote by $\left(\partial_{x^j}\right)$ the corresponding tangent vector field. The Laplace-Beltrami operator $\Delta_\Gamma$ is given locally by the formula
\begin{equation}\label{eqlb}
\Delta_\Gamma=\frac{1}{\sqrt{|\mathrm{g}|}} \sum_{i,j=1}^{N-1} \partial_{x^i} \left(\sqrt{|\mathrm{g}|}\, \mathrm{g}^{ij} \partial_{x^j}\right),
\end{equation}
where $\mathrm{g}=\left(\mathrm{g}_{ij}\right)$ still denotes the tensor matrix of $\mathrm{g}$, $\mathrm{g}^{-1}=\left(\mathrm{g}^{ij}\right)$ its inverse with determinant $|\mathrm{g}|=\det\left(\mathrm{g}_{ij}\right)$.

We refer to \cite{MMS'17,KM'19,FGGR'02} for a detailed exposition of dynamic boundary conditions. We also refer to the seminal paper \cite{Go'06}, where the physical derivation and interpretation of dynamic boundary conditions $\eqref{eq1to4}_2$ are discussed. Throughout the paper, we will often use the surface divergence formula
\begin{equation}\label{sdt}
\int_\Gamma \Delta_\Gamma y\, z \,\d S =- \int_\Gamma \langle \nabla_\Gamma y, \nabla_\Gamma z\rangle_\Gamma \,\d S, \quad y\in H^2(\Gamma), \, z\in H^1(\Gamma),
\end{equation}
where $\nabla_\Gamma y$ is the tangential gradient given by $\nabla_{\Gamma} y=\nabla y-\left(\partial_{\nu} y\right) \nu$, and $\langle \cdot, \cdot \rangle_\Gamma$ is the Riemannian inner product of tangential vectors on $\Gamma$.

The object of this paper is to numerically identify some unknown initial temperatures from the final ones. This is the so-called inverse initial data problem (or the backward parabolic problem) known as a typical problem which is severely ill-posed in the sense of Hadamard, that is, any arbitrarily small change in the initial datum lead to drastically huge change in the output solution \cite{VI}. Physically, this can be seen as a consequence of the time irreversibility and the smoothing effect of the heat process. 

Many contributions have been devoted to the enrichment of this field. For instance, the authors of \cite{WWMYBH} present a method based on reproducing kernels applied to a two-dimensional parabolic inverse source problem. The paper \cite{LYZCDYCH} investigate the simultaneous determination of an unknown initial temperature and a heat source from a given observation at a fixed internal location in heat equation with Dirichlet boundary conditions. The authors of \cite{KMMY} develop some numerical schemes based on positivity-preserving Padé approximations to solve an inverse initial data problem for heat equation with Dirichlet boundary conditions. In \cite{SSPRPNS}, the authors have been interested in recovering the initial temperature from the measurement on a part of the boundary. The paper \cite{KMFDZ} has considered a similar problem with a regularizing parameter that approximates and regularizes the heat conduction model. More recently, the authors of \cite{ABCEO} have analyzed numerically an inverse source problem in a degenerate parabolic equation. They have adapted an optimal control approach using finite element discretization and the augmented Lagrangian method. For more recent developments within this framework, we refer to the excellent book \cite{HR'21}.

The approach presented in this work can be seen as a continuation of the approach given in \cite{ACM1'21'} for the identification of source terms in heat equation with dynamic boundary conditions from the final time measured data. Compared to the aforementioned inverse source problem, the ill-posedness of the inverse initial temperatures problem is more severe. In view of the stability results in \cite{ACM'21, ACMO'20}, one can expect a weak convergence rate and slow numerical results compared to the fast numerical results for the inverse source problem obtained in \cite{ACM1'21'} by the Landweber iteration. In fact, in \cite{ACMO'20}, the authors have obtained a Lipschitz stability estimate for the source terms, while in \cite{ACM'21} only a logarithmic stability estimate is proved for the initial temperatures. It should be pointed out that such conditional stability results serve to suitably select the regularizing parameter and determine the convergence rate of the regularized solution to an exact solution \cite{CY'00}. As a remedy to this issue, we propose a modified algorithm based on the conjugate gradient (CG) method to speed up the convergence process. We refer to \cite{EHN'00} for a detailed exposition of such algorithms.

To the best of our knowledge, the problem considered in this paper has only been shown to be solvable in a particular form of static boundary conditions (Dirichlet, Neumann or Robin); see, for instance, \cite{KMFDZ,SSPRPNS,KMMY}. However, for more general boundary conditions as dynamic boundary conditions, the problem has not been studied, and the numerical reconstruction needs to be investigated. Therefore, the main objective of the present work is to extend the weak solution strategy for the determination of initial temperatures to undertake the numerical aspect of such a problem.

The paper is structured as follows: in Section \ref{sec2}, we recall some well-posedness and regularity results related to system \eqref{eq1to4} that are needed in the sequel. In Section \ref{sec3}, we study the minimization problem associated to the cost functional. Next, we prove an explicit formula for the gradient of the cost functional. This is done by introducing suitable adjoint and sensitivity systems. Then the Lipschitz continuity of the gradient is obtained. In Section \ref{sec5}, the gradient formula is algorithmically implemented via the conjugate gradient method for the numerical reconstruction of initial temperatures in one- and two-dimensional systems. Finally, Section \ref{sec6} is devoted to some conclusions.

\section{Wellposedness of the Cauchy problem}\label{sec2}
We start by recalling some relevant results concerning the wellposedness and the regularity of the solution of system \eqref{eq1to4}. For convenience, we refer to \cite{MMS'17} for more details.

We consider the Lebesgue measure $\d x$ on $\Omega$ and the surface measure $\d S$ on $\Gamma$. Let us introduce the functional spaces
$$\mathbb{L}^2:=L^2(\Omega, \d x)\times L^2(\Gamma, \d S)\qquad \text{and}\qquad \mathbb{L}_T^2:=L^2(\Omega_T)\times L^2(\Gamma_T).$$
Note that $\mathbb{L}^2$ and $\mathbb{L}_T^2$ are Hilbert spaces respectively endowed with the scalar products defined by
\begin{align*}
\langle (y,y_\Gamma),(z,z_\Gamma)\rangle &=\langle y,z\rangle_{L^2(\Omega)} +\langle y_\Gamma,z_\Gamma\rangle_{L^2(\Gamma)},
\end{align*}
\begin{align*}
\langle (y,y_\Gamma),(z,z_\Gamma)\rangle_{\mathbb{L}_T^2} &=\langle y,z\rangle_{L^2(\Omega_T)} +\langle y_\Gamma,z_\Gamma\rangle_{L^2(\Gamma_T)}.
\end{align*}
We further introduce the functional space $$\mathbb{H}^k:=\left\{(y,y_\Gamma)\in H^k(\Omega)\times H^k(\Gamma)\colon y_{|\Gamma} =y_\Gamma \right\} \text{ for } k=1,2,$$ endowed with the standard product norm.

The system \eqref{eq1to4} can be written as an abstract Cauchy  problem as follows
\begin{equation}\label{acp}
\text{(ACP)	} \; \begin{cases}
\hspace{-0.1cm} \partial_t Y=\mathcal{A} Y, \quad 0<t \le T, \nonumber\\
\hspace{-0.1cm} Y(0)=\mathcal{G}:=(f, g), \nonumber
\end{cases}
\end{equation}
where $Y:=(y,y_{\Gamma})$, and $\mathcal{A} \colon D(\mathcal{A}) \subset \mathbb{L}^2 \longrightarrow \mathbb{L}^2$ is a linear operator defined by
\begin{equation*}
\mathcal{A}=\begin{pmatrix} d\Delta - a & 0\\ -d\partial_\nu & \gamma \Delta_\Gamma - b\end{pmatrix}, \qquad \qquad D(\mathcal{A})=\mathbb{H}^2.
\end{equation*}
The operator $\mathcal{A}$ is a bounded perturbation of the linear operator $\mathcal{A}_0 \colon D(\mathcal{A}_0) \subset \mathbb{L}^2 \longrightarrow \mathbb{L}^2$ defined by
\begin{equation*}
\mathcal{A}_0=\begin{pmatrix} d\Delta & 0\\ -d\partial_\nu & \gamma \Delta_\Gamma\end{pmatrix}, \qquad \qquad D(\mathcal{A}_0)=\mathbb{H}^2.
\end{equation*}
It has been shown in \cite{MMS'17} that the operator $\mathcal{A}_0$ is the infinitesimal generator of an analytic $C_0$-semigroup on $\mathbb{L}^2$. Thus, the system \eqref{eq1to4} is well-posed and enjoys the maximal regularity property by a standard perturbation argument.

The following spectral lemma will be useful in the sequel.
\begin{lemma}\label{lmsp1}
There exists an increasing sequence of eigenvalues $(\lambda_n)_{n \ge 1}$ that constitutes the spectrum of $-\mathcal{A}_0$ such that $0= \lambda_1 < \lambda_2 < \cdots$, and $\lambda_n \to \infty$ as $n\to \infty$. Moreover, the sequence of the corresponding eigenfunctions $(\phi_n)_{n\ge 1}$ forms an orthonormal basis of $\mathbb{L}^2$.
\end{lemma}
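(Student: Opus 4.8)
The plan is to realize $-\mathcal{A}_0$ as the self-adjoint operator associated with a symmetric, non-negative, closed bilinear form on the energy space $\mathbb{H}^1$, and then to use a compactness argument to obtain the discrete spectrum together with the orthonormal basis of eigenfunctions. First I would introduce the form
$$\mathfrak{a}(Y, Z) = d\int_\Omega \nabla y \cdot \nabla z\, \d x + \gamma\int_\Gamma \langle \nabla_\Gamma y_\Gamma, \nabla_\Gamma z_\Gamma\rangle_\Gamma\, \d S, \qquad Y = (y, y_\Gamma),\ Z = (z, z_\Gamma) \in \mathbb{H}^1,$$
and verify that it represents $-\mathcal{A}_0$, that is, $\langle -\mathcal{A}_0 Y, Z\rangle = \mathfrak{a}(Y, Z)$ for all $Y \in \mathbb{H}^2 = D(\mathcal{A}_0)$ and $Z \in \mathbb{H}^1$. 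The computation combines Green's formula on $\Omega$ with the surface divergence formula \eqref{sdt} on $\Gamma$; the crucial point is that the two boundary integrals carrying $d\,\partial_\nu y$ — one produced by integration by parts in $\Omega$, the other by the off-diagonal coupling $-d\partial_\nu$ in $\mathcal{A}_0$ — cancel exactly, thanks to the compatibility condition $z_{|\Gamma} = z_\Gamma$ built into $\mathbb{H}^1$. This cancellation is precisely what renders the matrix operator $\mathcal{A}_0$, despite its non-symmetric appearance, symmetric and non-negative on $\mathbb{L}^2$.

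Next I would check that $\mathfrak{a}$ is densely defined, symmetric, non-negative, and closed: the form norm $\mathfrak{a}(Y,Y) + \|Y\|_{\mathbb{L}^2}^2$ is equivalent to the product $H^1(\Omega)\times H^1(\Gamma)$ norm on the closed subspace $\mathbb{H}^1$, so closedness is immediate (when $N=1$ and $\gamma = 0$ the surface term disappears and the argument is even simpler, $\Gamma$ being a finite set of points). By the representation theorem for closed symmetric forms bounded below, $-\mathcal{A}_0$ is self-adjoint and non-negative, hence its spectrum is real and contained in $[0, \infty)$. To upgrade this to a discrete spectrum I would invoke the compact embedding $\mathbb{H}^1 \hookrightarrow\hookrightarrow \mathbb{L}^2$, which follows from the Rellich–Kondrachov theorem on the bounded $C^2$ domain $\Omega$ and the compactness of $H^1(\Gamma)\hookrightarrow L^2(\Gamma)$ on the compact manifold $\Gamma$. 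This yields a compact resolvent, whence the spectral theorem for self-adjoint operators with compact resolvent delivers a non-decreasing sequence of eigenvalues of finite multiplicity tending to $+\infty$ and an associated orthonormal basis $(\phi_n)$ of $\mathbb{L}^2$.

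Finally I would identify the bottom of the spectrum. Since $\mathfrak{a}(Y,Y) = 0$ forces $\nabla y = 0$ in $\Omega$ and $\nabla_\Gamma y_\Gamma = 0$ on $\Gamma$, the connectedness of $\Omega$ implies that $y$ is constant and that $y_\Gamma = y_{|\Gamma}$ equals the same constant; hence the kernel of $-\mathcal{A}_0$ is the one-dimensional span of the normalized constant pair, giving $\lambda_1 = 0$ and $\lambda_2 > 0$. I expect the main technical obstacle to lie in the representation step: one must carefully justify the integration-by-parts identity up to $\Gamma$ for $Y \in \mathbb{H}^2$ and track the boundary contributions so that the cancellation is rigorous, while the compact embedding on $\Gamma$ has to be handled through the intrinsic Sobolev theory on the manifold rather than a naive Euclidean estimate. (The strict inequalities $\lambda_1 < \lambda_2 < \cdots$ in the statement are to be read as an enumeration of the \emph{distinct} eigenvalues, with $(\phi_n)$ a complete orthonormal system obtained by choosing an orthonormal basis inside each finite-dimensional eigenspace.)
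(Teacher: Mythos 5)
Your proposal is correct in outline, but it takes a genuinely more self-contained route than the paper. The paper's proof is essentially three citations: self-adjointness and nonnegativity of $-\mathcal{A}_0$ are quoted from \cite[Proposition 2.1]{MMS'17}, the compact resolvent is deduced from the compact embedding of the \emph{operator} domain $\mathbb{H}^2$ into $\mathbb{L}^2$ via \cite[Proposition 4.25]{EN'99}, and the conclusion is left to standard spectral theory. You instead rebuild the input to those citations: you construct the form $\mathfrak{a}$ on $\mathbb{H}^1$, verify the cancellation of the two $d\,\partial_\nu y$ boundary terms through the compatibility condition $z_{|\Gamma}=z_\Gamma$ (which is indeed the mechanism behind the symmetry of the non-symmetric-looking matrix $\mathcal{A}_0$), obtain the compact resolvent from the compact embedding of the \emph{form} domain $\mathbb{H}^1$ rather than of $\mathbb{H}^2$, and --- going beyond the paper --- actually prove that $\lambda_1=0$ is an eigenvalue with one-dimensional eigenspace spanned by the constant pair, using connectedness of $\Omega$ and the trace coupling; the lemma asserts $\lambda_1=0$ but the paper's proof never verifies it. Your reading of the strict inequalities as an enumeration of the distinct eigenvalues, with orthonormal bases chosen inside each finite-dimensional eigenspace, is also the correct one.

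One step deserves more care than you give it. The identity $\langle -\mathcal{A}_0 Y, Z\rangle = \mathfrak{a}(Y,Z)$ for $Y\in\mathbb{H}^2$, $Z\in\mathbb{H}^1$ only shows that the self-adjoint operator $A_{\mathfrak{a}}$ furnished by the representation theorem \emph{extends} $-\mathcal{A}_0$; it does not by itself make $-\mathcal{A}_0$, with its domain $\mathbb{H}^2$, self-adjoint. A closed symmetric restriction of a self-adjoint operator can have spectrum equal to a half-plane, so the lemma as stated for $-\mathcal{A}_0$ requires the domain identification $D(A_{\mathfrak{a}})=\mathbb{H}^2$, i.e. $H^2$-regularity up to the boundary for the associated Wentzell problem --- which is precisely what \cite{MMS'17} supplies and what the paper's citation silently absorbs. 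You flag the integration by parts as the main technical obstacle, but that part is routine; the elliptic regularity identifying the two domains is where the real work sits. With that step added, your argument is complete and in fact sharper than the paper's, since it also settles the simplicity of the bottom eigenvalue.
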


\begin{proof}
By \cite[Proposition 2.1]{MMS'17}, the operator $-\mathcal{A}_0$ is self-adjoint and nonnegative. Moreover, since $\mathbb{H}^{2} \hookrightarrow H^{2}(\Omega) \times H^{2}(\Gamma)$ is continuous, then $\mathbb{H}^{2} \hookrightarrow \mathbb{L}^2$ is compact. By \cite[Proposition 4.25]{EN'99}, we deduce that $-\mathcal{A}_0$ has a compact resolvent. Now, the existence of $(\lambda_n)_{n\ge 1}$ and $(\phi_n)_{n\ge 1}$ follows from the standard spectral theory.
\end{proof}

\section{Gradient formula and Lipschitz continuity of the gradient}\label{sec3}
In this section, we consider the following inverse initial data problem.

\noindent\textbf{Backward Parabolic Problem (BPP).} The initial temperature $\mathcal{G}:=(f,g)$ in the system \eqref{eq1to4} is unknown and to be restored from the measured final temperature
$$Y_{T}:=\left(y(T,\cdot),y_\Gamma(T,\cdot)\right)\in \mathbb{L}^2.$$
The measured data is assumed to contain a random noise.

Let $Y(t,\cdot,\mathcal{G})$ be the mild solution of \eqref{eq1to4} corresponding to the initial datum $\mathcal{G}=(f,g)$. We introduce the input-output operator $\Psi \colon \mathbb{L}^2 \longrightarrow \mathbb{L}^2$ as follows
$$(\Psi \mathcal{G})(\cdot)=Y_T(\cdot).$$
Then the BPP can be reformulated as the following operator equation
\begin{equation}\label{2eq2.3}
\Psi \mathcal{G}=Y_T, \quad Y_T \in \mathbb{L}^2.
\end{equation}
The following lemma shows that the operator $\Psi \colon \mathbb{L}^2 \longrightarrow \mathbb{L}^2$ is compact. 
\begin{lemma}\label{2lem2.4}
The input-output operator 
\begin{equation*}
\Psi \colon \mathbb{L}^2 \ni \mathcal{G} \longmapsto Y(T,\cdot,\mathcal{G}) \in \mathbb{L}^2
\end{equation*}
is a linear compact operator.
\end{lemma}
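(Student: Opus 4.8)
The plan is to recognize $\Psi$ as the time-$T$ map of the semigroup generated by $\mathcal{A}$ and to exploit the smoothing effect of analyticity together with the compact embedding $\mathbb{H}^2 \hookrightarrow \mathbb{L}^2$ already used in Lemma \ref{lmsp1}. Since $\mathcal{A} = \mathcal{A}_0 + B$, where $B$ is the bounded multiplication operator $(y,y_\Gamma)\mapsto(-ay,-by_\Gamma)$ and $\mathcal{A}_0$ generates an analytic $C_0$-semigroup on $\mathbb{L}^2$, a standard bounded-perturbation argument shows that $\mathcal{A}$ also generates an analytic $C_0$-semigroup, which I denote $(S(t))_{t\ge0}$. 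The mild solution is then $Y(t,\cdot,\mathcal{G}) = S(t)\mathcal{G}$, so $\Psi\mathcal{G} = S(T)\mathcal{G}$; linearity of $\Psi$ is immediate from linearity of $S(T)$.

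For compactness, first I would invoke the regularizing property of analytic semigroups: for every $t>0$ one has $\mathrm{ran}\,S(t)\subset D(\mathcal{A}) = \mathbb{H}^2$, together with the estimate $\|\mathcal{A}S(t)\|_{\mathcal{L}(\mathbb{L}^2)} \le C/t$. Combined with the uniform bound $\|S(t)\|_{\mathcal{L}(\mathbb{L}^2)}\le M$, this shows that $S(T)$ maps $\mathbb{L}^2$ into $D(\mathcal{A})$ boundedly for the graph norm. Since $\mathcal{A}$ is closed with $D(\mathcal{A})=\mathbb{H}^2$, the closed graph theorem yields that the graph norm and the $\mathbb{H}^2$-norm are equivalent, and hence $S(T)\colon \mathbb{L}^2 \to \mathbb{H}^2$ is bounded.

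Next I would factor $\Psi$ as the composition
$$\Psi = \iota \circ S(T),\qquad \mathbb{L}^2 \xrightarrow{\;S(T)\;} \mathbb{H}^2 \xrightarrow{\;\iota\;} \mathbb{L}^2,$$
where $\iota$ is the canonical embedding. The embedding $\iota$ is compact, which is precisely the fact recorded in the proof of Lemma \ref{lmsp1} (via $\mathbb{H}^2 \hookrightarrow H^2(\Omega)\times H^2(\Gamma)$ and a Rellich-type compactness). Since the composition of a bounded operator with a compact one is compact, $\Psi = S(T)$ is compact, completing the argument.

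I expect the only delicate point to be the justification that $S(T)\colon \mathbb{L}^2\to \mathbb{H}^2$ is bounded, i.e.\ converting the abstract smoothing estimate $\|\mathcal{A}S(T)\|<\infty$ into boundedness with respect to the concrete Sobolev norm of $\mathbb{H}^2$; this rests on the identification $D(\mathcal{A})=\mathbb{H}^2$ and the equivalence of norms. As an alternative that avoids the smoothing estimate, one could argue spectrally: assuming $a,b$ real-valued, $-\mathcal{A}$ is self-adjoint and bounded below, admitting (as in Lemma \ref{lmsp1}) eigenvalues $\mu_n\to\infty$ with an orthonormal eigenbasis $(\psi_n)$, so that $S(T)\mathcal{G} = \sum_n e^{-\mu_n T}\langle \mathcal{G},\psi_n\rangle \psi_n$ is the operator-norm limit of its finite-rank truncations, with error controlled by $e^{-\mu_{N+1}T}\to 0$, which again gives compactness. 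I would nonetheless present the factorization argument as the main line, since it relies only on results already available in the excerpt.
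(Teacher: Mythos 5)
Your proof is correct, but it is genuinely more than what the paper provides: the paper's ``proof'' of Lemma \ref{2lem2.4} is a one-line citation to \cite[Lemma 2]{ACM1'21'}, and the argument suggested by the paper's own Section 4 is the spectral one, which is essentially your \emph{alternative} route. Indeed, the paper later shows $\Psi$ is self-adjoint with $\Psi\phi_k=\mathrm{e}^{-\lambda_k T}\phi_k$ and the Fourier representation of $Y(T,\cdot,\mathcal{G})$, from which compactness follows as the operator-norm limit of finite-rank truncations with error $\mathrm{e}^{-\lambda_{N+1}T}\to 0$ --- exactly your second argument (note this needs $a,b$ real-valued for self-adjointness, which is implicit in the paper, and in Section 4 it is carried out only for $a=b=0$). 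Your main line --- factoring $\Psi=\iota\circ S(T)$ through $D(\mathcal{A})=\mathbb{H}^2$ via the analytic smoothing estimate $\|\mathcal{A}S(T)\|_{\mathcal{L}(\mathbb{L}^2)}\le C/T$ and the compact embedding $\mathbb{H}^2\hookrightarrow\mathbb{L}^2$ from the proof of Lemma \ref{lmsp1} --- is a different and in some ways preferable route: it is self-contained within the excerpt, requires no self-adjointness or eigenbasis (so it handles arbitrary bounded, even complex-valued, potentials), and all its ingredients check out: bounded perturbation preserves analyticity and the domain $\mathbb{H}^2$, the graph norm of the closed operator $\mathcal{A}$ on $\mathbb{H}^2$ is equivalent to the Sobolev norm by the bounded inverse theorem (the identity is continuous from $(\mathbb{H}^2,\|\cdot\|_{\mathbb{H}^2})$ to $(D(\mathcal{A}),\text{graph norm})$ since $\Delta$, $\partial_\nu$, $\Delta_\Gamma$ and multiplication by $a,b$ are bounded in the relevant Sobolev scales, and both spaces are complete), and composition with the compact embedding gives compactness. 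A small simplification worth noting: you could bypass the norm-equivalence step entirely by observing that $(D(\mathcal{A}),\text{graph norm})\hookrightarrow\mathbb{L}^2$ is compact precisely because $\mathcal{A}$ has compact resolvent, which the proof of Lemma \ref{lmsp1} already established for $\mathcal{A}_0$ and which persists under bounded perturbation with the same domain. What the spectral route buys in exchange is the explicit singular values $\sigma_k=\mathrm{e}^{-\lambda_k T}$, which the paper exploits later to quantify the exponential ill-posedness of the backward problem.
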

\begin{proof}
The proof is similar to the proof of \cite[Lemma 2]{ACM1'21'}.
\end{proof}

Consequently, the inverse problem \eqref{2eq2.3} is ill-posed (unstable). Therefore, we define a quasi-solution $\mathcal{G }_*$ of the BPP as a minimizer of the problem
\begin{align}
&\mathcal{J}(\mathcal{G}_*)= \inf_{\mathcal{G}\in \mathbb{L}^2} \mathcal{J}(\mathcal{G}), \label{neq2.5}\\
\mathcal{J}(\mathcal{G})&=\frac{1}{2} \|Y(T, \cdot,\mathcal{G})-Y_{T}^\delta\|_{\mathbb{L}^2}^2, \qquad \mathcal{G}\in \mathbb{L}^2, \label{neq2.6}
\end{align}
where $Y_{T}^\delta=(y_{T}^\delta, y_{T, \Gamma}^\delta)$ is a noisy data of $Y_{T}$ such that $\|Y_{T}-Y_{T}^\delta\|\leq \delta$ for $\delta \geq 0$.

As a remedy to the ill-posedness of the BPP, the regularized Tikhonov functional is defined as
\begin{align}\label{regpb}
\mathcal{J}_\varepsilon(\mathcal{G})&=\frac{1}{2} \|Y(T, \cdot,\mathcal{G})-Y_{T}^\delta\|_{\mathbb{L}^2}^2 +\frac{\varepsilon}{2} \|\mathcal{G}\|_{\mathbb{L}^2}^2, \qquad \mathcal{G}\in \mathbb{L}^2, 
\end{align}
where $\varepsilon >0$ is the regularizing parameter.

Although the problem is unstable, one can obtain some conditional stability provided that a priori bound on unknown initial data is known. This is shown in the following lemma using the logarithmic convexity method \cite{Pa'75}.
\begin{lemma}
The solution $Y(t)$ of the system \eqref{eq1to4} satisfies the following estimate
\begin{equation}\label{lc}
    \|Y(t)\|_{\mathbb{L}^2} \le \|Y(0)\|_{\mathbb{L}^2}^{1-\frac{t}{T}} \|Y(T)\|_{\mathbb{L}^2}^{\frac{t}{T}}, \qquad 0\le t \le T.
\end{equation}
\end{lemma}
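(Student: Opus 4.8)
The plan is to establish \eqref{lc} via the logarithmic convexity method, i.e.\ to prove that the map $t\mapsto \ln\|Y(t)\|_{\mathbb{L}^2}^2$ is convex on $[0,T]$; the estimate then follows at once by comparing the value at an interior point $t$ with the chord joining the endpoints. The structural fact I would isolate first is that $\mathcal{A}$ is self-adjoint on $\mathbb{L}^2$. Indeed, $\mathcal{A}=\mathcal{A}_0-B$ with $B=\mathrm{diag}(a,b)$ a bounded self-adjoint multiplication operator (the potentials $a,b$ being real-valued), while $-\mathcal{A}_0$ is self-adjoint as recalled in the proof of Lemma \ref{lmsp1}. Since $\mathcal{A}$ generates an analytic semigroup, for any datum $\mathcal{G}\in\mathbb{L}^2$ the mild solution satisfies $Y(t)\in D(\mathcal{A}^k)$ for all $k$ and every $t>0$, and $t\mapsto Y(t)$ is smooth on $(0,T)$; moreover each $e^{t\mathcal{A}}$ is injective, so either $Y\equiv0$ (in which case \eqref{lc} is trivial) or $Y(t)\neq0$ for all $t\in[0,T]$.

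Working in the nontrivial case, I set $E(t):=\|Y(t)\|_{\mathbb{L}^2}^2>0$ and differentiate twice using $\partial_t Y=\mathcal{A}Y$ together with the self-adjointness of $\mathcal{A}$, obtaining
\begin{align*}
E'(t)&=2\langle \mathcal{A}Y,Y\rangle_{\mathbb{L}^2}, \\
E''(t)&=2\langle \mathcal{A}^2Y,Y\rangle_{\mathbb{L}^2}+2\|\mathcal{A}Y\|_{\mathbb{L}^2}^2=4\|\mathcal{A}Y\|_{\mathbb{L}^2}^2,
\end{align*}
where the last equality uses $\langle \mathcal{A}^2Y,Y\rangle_{\mathbb{L}^2}=\langle \mathcal{A}Y,\mathcal{A}Y\rangle_{\mathbb{L}^2}$. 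The Cauchy--Schwarz inequality then yields
$$E(t)E''(t)-\big(E'(t)\big)^2=4\Big(\|Y\|_{\mathbb{L}^2}^2\,\|\mathcal{A}Y\|_{\mathbb{L}^2}^2-\langle \mathcal{A}Y,Y\rangle_{\mathbb{L}^2}^2\Big)\ge 0.$$
Since $(\ln E)''=\big(EE''-(E')^2\big)/E^2$, this shows $\ln E$ is convex on $(0,T)$, hence (by continuity of $E$ on the closed interval) convex on $[0,T]$. Convexity gives $\ln E(t)\le(1-\tfrac{t}{T})\ln E(0)+\tfrac{t}{T}\ln E(T)$ for $0\le t\le T$, and exponentiating followed by taking square roots produces exactly \eqref{lc}.

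The main obstacle is not the algebra but the rigorous justification of the differentiation: I must guarantee that $Y(t)\in D(\mathcal{A})$ with $t\mapsto\mathcal{A}Y(t)$ itself differentiable, and that $E(t)$ stays strictly positive so that $\ln E$ is defined. Both are furnished by the analyticity of the semigroup (instantaneous smoothing for $t>0$) and the injectivity of $e^{t\mathcal{A}}$ (backward uniqueness), with the convexity inequality transferred from the open interval to $[0,T]$ by continuity. I note that if one wished to avoid invoking self-adjointness, the identity for $E''$ would acquire a commutator term coming from the antisymmetric part of $\mathcal{A}$, which would have to be absorbed by a Kato-type inequality; the self-adjoint structure available here removes this difficulty entirely.
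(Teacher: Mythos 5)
Your proof is correct, and at its core it is the same logarithmic-convexity argument as the paper's: both set up an energy $E(t)$ proportional to $\|Y(t)\|_{\mathbb{L}^2}^2$, establish $E(t)E''(t)-(E'(t))^2\ge 0$ by Cauchy--Schwarz, and conclude via convexity of $\log E$ and the chord inequality. The difference lies in how the derivatives are computed and justified. The paper works concretely with the PDE: it obtains $E'(t)$ by integrating by parts in $\Omega$ and on $\Gamma$ (Green's formula plus the surface divergence formula \eqref{sdt}), and a second integration by parts gives $E''(t)=2\|\partial_t Y(t)\|_{\mathbb{L}^2}^2$; since $\partial_t Y=\mathcal{A}Y$, this is exactly your identity $E''=4\|\mathcal{A}Y\|_{\mathbb{L}^2}^2$ (up to the factor coming from the paper's normalization $E=\tfrac{1}{2}\|Y\|_{\mathbb{L}^2}^2$), so the symmetry that the paper exploits pointwise through integration by parts is precisely the self-adjointness of $\mathcal{A}$ that you isolate abstractly. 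What your route buys is rigor at the two points the paper's formal computation passes over in silence: the justification of the two time differentiations (supplied by analytic smoothing, $Y(t)\in D(\mathcal{A}^k)$ for $t>0$, with convexity transferred from $(0,T)$ to $[0,T]$ by continuity --- the right fix, since smoothness is only guaranteed for $t>0$), and the strict positivity of $E$ needed to take logarithms (supplied by injectivity of $\mathrm{e}^{t\mathcal{A}}$, i.e.\ backward uniqueness; the paper merely assumes $\mathcal{G}\neq(0,0)$, which by itself does not rule out $E(t_0)=0$ at a later time without such an argument). Conversely, the paper's computation has the merit of being elementary and self-contained, displaying explicitly the sign structure of the dissipation terms $d\int_\Omega|\nabla y|^2\,\d x$, $\gamma\int_\Gamma|\nabla_\Gamma y_\Gamma|^2\,\d S$ and the potential terms, without invoking spectral or semigroup machinery beyond what Section \ref{sec2} records.
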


\begin{proof}
Let $\mathcal{G}:=(f,g)\ne (0,0)$. Let us define the energy of the system \eqref{eq1to4} as
$$E(t)=\frac{1}{2} \|Y(t)\|_{\mathbb{L}^2}^2, \qquad t \in [0,T].$$
Using the equations of system \eqref{eq1to4} and integrating by parts in $\Omega$ and on $\Gamma$, we obtain
\begin{align*}
E^{\prime}(t) &= \left\langle Y(t), \partial_t Y(t)\right\rangle_{\mathbb{L}^2}\\
&= \int_\Omega y \partial_t y \,\d x + \int_\Gamma y_\Gamma \partial_t y_\Gamma\, \d S\\
&= \int_\Omega y (d\Delta y - a y) \,\d x + \int_\Gamma y_\Gamma (\gamma \Delta_\Gamma y_\Gamma -d \partial_\nu y - by_\Gamma) \, \d S\\
& = -d \int_\Omega |\nabla y|^2 \,\d x - \int_\Omega a y^2 \,\d x -\gamma \int_\Gamma |\nabla_\Gamma y_\Gamma|^2 \, \d S - \int_\Gamma b y_\Gamma^2 \,\d S.
\end{align*}
Differentiating in time $t$ and integrating by parts again using \eqref{eq1to4}, we obtain
\begin{align*}
E^{\prime\prime}(t)&= 2 \int_\Omega \left(- d\nabla y\cdot \nabla (\partial_t y) -a y \partial_t y\right)\,\d x + 2  \int_\Gamma \left(-\gamma\nabla_\Gamma y_\Gamma\cdot \nabla_\Gamma (\partial_t y_\Gamma) -b y_\Gamma \partial_t y_\Gamma\right)\, \d S \\
&=2 \|\partial_t Y(t)\|_{\mathbb{L}^2}^2.
\end{align*}
By Cauchy-Schwarz inequality, we infer that
$$E(t) E^{\prime \prime}(t) -(E^{\prime}(t))^{2}=\|Y(t)\|_{\mathbb{L}^2}^2 \|\partial_t Y(t)\|_{\mathbb{L}^2}^2 -\left\langle Y(t), \partial_t Y(t)\right\rangle^2_{\mathbb{L}^2} \ge 0.$$
It follows then that
$$(\log E(t))^{\prime \prime}=\frac{E(t) E^{\prime \prime}(t) -(E^{\prime}(t))^{2}}{E^2(t)} \geq 0.$$
This implies that $\log E(t)$ is a convex function of $t$, which yields
$$\log E(t) \leq\left(1-\frac{t}{T}\right) \log E(0)+\frac{t}{T} \log E(T).$$
This completes the proof.
\end{proof}
Now if we assume that $\|Y(0)\|_{\mathbb{L}^2} \le M$ in \eqref{lc}, where $M$ is a given positive constant, the following conditional stability holds
$$\|Y(t)\|_{\mathbb{L}^2} \le M^{1-\frac{t}{T}} \|Y(T)\|_{\mathbb{L}^2}^{\frac{t}{T}}, \qquad 0\le t \le T.$$
Such a result would be of practical interest if one can compute the prior bound $M$ from the accessible data of the physical model.

Next, we derive a gradient formula for $\mathcal{J}$ via the mild solution $\Phi=(\varphi, \varphi_\Gamma)$ of an adjoint system.
\begin{proposition}\label{2prop2.6}
The cost functional $\mathcal{J}$ is Fr\'echet differentiable and its gradient at each $\mathcal{G}\in \mathbb{L}^2$ is given by
\begin{equation}\label{2eq2.11}
\mathcal{J}'(\mathcal{G})(x)= \Phi(x,0,\mathcal{G}) \qquad \text{for a.e }\, x\in \Omega \times \Gamma,
\end{equation}
where $\Phi(t,\cdot,\mathcal{G})=(\varphi, \varphi_\Gamma)$ is the mild solution of the following adjoint system
\begin{empheq}[left = \empheqlbrace]{alignat=2}
\begin{aligned}
&-\partial_t \varphi -d \Delta \varphi +a(x)\varphi = 0, &&\qquad \text{in } \Omega_T , \\
&-\partial_t \varphi_{\Gamma} -\gamma \Delta_{\Gamma} \varphi_{\Gamma} +d\partial_{\nu} \varphi +b(x)\varphi_{\Gamma} = 0, &&\qquad\text{on } \Gamma_T, \\
&\varphi_{\Gamma}(t,x) = \varphi_{|\Gamma}(t,x), && \qquad\text{on } \Gamma_T, \\
&\varphi\rvert_{t=T} =y(T, \cdot, \mathcal{G})-y_{T}^\delta,   &&\qquad \text{in } \Omega ,\\
&\varphi_{\Gamma}\rvert_{t=T} = y_\Gamma(T, \cdot, \mathcal{G})-y_{T, \Gamma}^\delta, && \qquad\text{on } \Gamma .
\label{aeq1to5}
\end{aligned}
\end{empheq}
\end{proposition}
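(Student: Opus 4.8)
The plan is to use that, by linearity of \eqref{eq1to4} in the initial datum, the input–output map is the bounded linear operator of Lemma \ref{2lem2.4}, with $\Psi\mathcal{G}=Y(T,\cdot,\mathcal{G})$, so that $\mathcal{J}(\mathcal{G})=\tfrac12\|\Psi\mathcal{G}-Y_T^\delta\|_{\mathbb{L}^2}^2$ is a quadratic functional. Fixing a direction $h\in\mathbb{L}^2$, I would expand
\begin{align*}
\mathcal{J}(\mathcal{G}+h)-\mathcal{J}(\mathcal{G})=\langle \Psi\mathcal{G}-Y_T^\delta,\,\Psi h\rangle_{\mathbb{L}^2}+\tfrac12\|\Psi h\|_{\mathbb{L}^2}^2 .
\end{align*}
Since $\tfrac12\|\Psi h\|_{\mathbb{L}^2}^2\le\tfrac12\|\Psi\|^2\,\|h\|_{\mathbb{L}^2}^2=o(\|h\|_{\mathbb{L}^2})$, the continuous linear map $h\mapsto\langle \Psi\mathcal{G}-Y_T^\delta,\Psi h\rangle_{\mathbb{L}^2}$ is the Fr\'echet derivative, and the remaining task is to represent it as $\langle\,\cdot\,,h\rangle_{\mathbb{L}^2}$ against a fixed element. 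Writing $Z=(z,z_\Gamma):=Y(\cdot,\cdot,h)$ for the sensitivity solution, i.e. the solution of \eqref{eq1to4} with initial datum $h$ (which by linearity is the original homogeneous system), the linear term equals $\langle Y(T,\cdot,\mathcal{G})-Y_T^\delta,\,Z(T)\rangle_{\mathbb{L}^2}$.

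The heart of the matter is a duality identity between $Z$ and the adjoint solution $\Phi=(\varphi,\varphi_\Gamma)$ of \eqref{aeq1to5}. I would differentiate $t\mapsto\langle Z(t),\Phi(t)\rangle_{\mathbb{L}^2}$, insert the two evolution systems, and integrate by parts. In $\Omega$, the potential terms in $a$ cancel and Green's second identity turns the Laplacian terms into the boundary contribution $d\int_\Gamma\big(\partial_\nu z\,\varphi_\Gamma-z_\Gamma\,\partial_\nu\varphi\big)\,\d S$; on $\Gamma$, the $b$-terms cancel and the surface divergence formula \eqref{sdt} makes the Laplace–Beltrami terms self-adjoint, leaving $\int_\Gamma\big(-d\,\partial_\nu z\,\varphi_\Gamma+d\,z_\Gamma\,\partial_\nu\varphi\big)\,\d S$. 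The two normal-derivative contributions cancel exactly — this is precisely the role of the coupling term $d\partial_\nu\varphi$ carrying the same sign in \eqref{aeq1to5} as $d\partial_\nu y$ in \eqref{eq1to4} — so that $\frac{d}{dt}\langle Z(t),\Phi(t)\rangle_{\mathbb{L}^2}=0$. Integrating over $[0,T]$ and using the terminal datum $\Phi(T)=Y(T,\cdot,\mathcal{G})-Y_T^\delta$ gives
\begin{align*}
\langle Y(T,\cdot,\mathcal{G})-Y_T^\delta,\,Z(T)\rangle_{\mathbb{L}^2}=\langle h,\,\Phi(0,\cdot,\mathcal{G})\rangle_{\mathbb{L}^2},
\end{align*}
which combined with the expansion above yields $\mathcal{J}'(\mathcal{G})=\Phi(0,\cdot,\mathcal{G})$, i.e. \eqref{2eq2.11}.

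The main obstacle is to make the integration by parts rigorous, since for data merely in $\mathbb{L}^2$ the mild solutions $Z$ and $\Phi$ need not take values in $D(\mathcal{A})=\mathbb{H}^2$ up to the time endpoints. I would resolve this by density: first establish the identity for smooth $h$ and smooth terminal data, where analyticity of the semigroup and the maximal regularity recalled in Section \ref{sec2} place the solutions in $\mathbb{H}^2$ on every $[\eta,T-\eta]$, legitimizing Green's identity and \eqref{sdt}; then let $\eta\to0$ by strong continuity, and finally pass to general $\mathbb{L}^2$ data using the boundedness of $\Psi$ and the continuity of the adjoint solution operator $\big(Y(T,\cdot,\mathcal{G})-Y_T^\delta\big)\mapsto\Phi(0,\cdot,\mathcal{G})$.

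An alternative that discharges the rigor most cleanly is to observe that, since $-\mathcal{A}_0$ is self-adjoint and the perturbation $\mathrm{diag}(a,b)$ is bounded and self-adjoint, the full operator $\mathcal{A}=\mathcal{A}_0-\mathrm{diag}(a,b)$ is self-adjoint, so $\Psi=e^{T\mathcal{A}}$ satisfies $\Psi^*=\Psi$. The backward system \eqref{aeq1to5} reads $-\partial_t\Phi=\mathcal{A}\Phi$ with $\Phi(T)=\Psi\mathcal{G}-Y_T^\delta$, whence $\Phi(0)=e^{T\mathcal{A}}(\Psi\mathcal{G}-Y_T^\delta)=\Psi^*(\Psi\mathcal{G}-Y_T^\delta)$, which is exactly the Riesz representative of the linear term and gives \eqref{2eq2.11} directly. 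From this viewpoint the duality computation of the second paragraph is nothing but the concrete verification that the PDE in \eqref{aeq1to5} realizes the generator $\mathcal{A}^*=\mathcal{A}$.
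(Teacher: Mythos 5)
Your proof is correct, and its core duality step is the same computation as the paper's, but the surrounding architecture is genuinely different. The paper works with the increment $\delta\mathcal{J}(\mathcal{G})$ directly: it expands the squared norms pointwise via the algebraic identity $\tfrac12[(x-z)^2-(y-z)^2]=(y-z)(x-y)+\tfrac12(x-y)^2$, transforms the linear term by integrating $\partial_t(\varphi\,\delta y)$ over $\Omega_T$ (Green's identity plus \eqref{sdt}, with the same cancellation of the $d\partial_\nu$ boundary contributions that you identify), and then disposes of the quadratic remainder by an energy/Gronwall estimate yielding $\|\delta Y(t)\|_{\mathbb{L}^2}^2\le \mathrm{e}^{2DT}\|\delta\mathcal{G}\|_{\mathbb{L}^2}^2$. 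You instead observe at the outset that $\mathcal{J}$ is a quadratic functional of the bounded linear operator $\Psi$ of Lemma \ref{2lem2.4}, which makes Fr\'echet differentiability and the $o(\|h\|)$ remainder immediate, and you recast the paper's integration by parts as constancy of $t\mapsto\langle Z(t),\Phi(t)\rangle_{\mathbb{L}^2}$ — equivalent in substance, cleaner in form. Two honest trade-offs: first, your remainder bound leans on Lemma \ref{2lem2.4}, whose proof the paper defers to a citation, whereas the paper's explicit Gronwall constant $\mathrm{e}^{2DT}$ is not decorative — it is reused verbatim (inequality \eqref{inequa1}) in the proof of Lemma \ref{2lem2.8} to produce the explicit Lipschitz constant \eqref{lip}, so the paper's heavier route buys quantitative information yours does not. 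Second, you go beyond the paper in two welcome respects: you flag and repair the regularity gap in the integration by parts (mild solutions with $\mathbb{L}^2$ data need not lie in $\mathbb{H}^2$ at the endpoints; your analyticity-on-$[\eta,T-\eta]$ plus continuity-of-the-pairing argument is the right fix, and the preliminary density step is in fact superfluous given analyticity), and your alternative via self-adjointness of $\mathcal{A}=\mathcal{A}_0-\mathrm{diag}(a,b)$, giving $\Phi(0)=\mathrm{e}^{T\mathcal{A}}\bigl(\Psi\mathcal{G}-Y_T^\delta\bigr)=\Psi^*(\Psi\mathcal{G}-Y_T^\delta)$ directly, is a genuinely different and shorter proof that also explains structurally why the paper's later spectral lemma finds $\Psi$ self-adjoint; just note that it uses that $a$ and $b$ are real-valued, and that the paper only states self-adjointness of $-\mathcal{A}_0$ (the bounded symmetric perturbation argument you invoke closes this gap correctly).
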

\begin{proof}
Let $\mathcal{G}, \mathcal{G}+ \delta \mathcal{G}\in \mathbb{L}^2$ be given initial data and $Y(x,t,\mathcal{G})$, $Y(x,t,\mathcal{G}+\delta\mathcal{G})$ be corresponding solutions of \eqref{eq1to4}. Hence, By linearity of the systems, $$\delta Y(x,t,\mathcal{G}):=Y(x,t,\mathcal{G}+\delta\mathcal{G})-Y(x,t,\mathcal{G}),$$ is the mild solution of the following system
\begin{empheq}[left = \empheqlbrace]{alignat=2}
\begin{aligned}
&\partial_t \delta y -d \Delta \delta y+a(x)\delta y = 0, &&\qquad \text{in } \Omega_T , \\
&\partial_t \delta y_{\Gamma} -\gamma \Delta_{\Gamma} \delta y_{\Gamma}+d\partial_{\nu} \delta y + b(x)\delta y_{\Gamma} = 0, &&\qquad \text{on } \Gamma_T, \\
&\delta y_{\Gamma}(t,x) =\delta y_{|\Gamma}(t,x), &&\qquad \text{on } \Gamma_T, \\
&(\delta y,\delta y_{\Gamma})\rvert_{t=0}=(\delta f,\delta g),   &&\qquad \Omega\times\Gamma \label{eq12}
\end{aligned}
\end{empheq}
Let us compute the difference
$$\delta \mathcal{J}(\mathcal{G}):=\mathcal{J}(\mathcal{G}+\delta \mathcal{G}) -\mathcal{J}(\mathcal{G}).$$
We have
\begin{align*}
\delta \mathcal{J}(\mathcal{G})&= \frac{1}{2} \|Y(T, \cdot, \mathcal{G}+ \delta \mathcal{G})-Y_{T}^\delta\|_{\mathbb{L}^2}^2 -\frac{1}{2} \|Y(T, \cdot,\mathcal{G})-Y_{T}^\delta\|_{\mathbb{L}^2}^2, \nonumber\\
&= \frac{1}{2} \left(\|y(T, \cdot, \mathcal{G}+ \delta \mathcal{G})-y_{T}^\delta\|_{L^2(\Omega)}^2 + \|y_\Gamma(T, \cdot, \mathcal{G}+ \delta \mathcal{G})-y_{T, \Gamma}^\delta\|_{L^2(\Gamma)}^2\right) \nonumber\\
& \qquad - \frac{1}{2} \left(\|y(T, \cdot, \mathcal{G})-y_{T}^\delta\|_{L^2(\Omega)}^2 + \|y_\Gamma(T, \cdot, \mathcal{G})-y_{T, \Gamma}^\delta\|_{L^2(\Gamma)}^2\right) \nonumber\\
&= \frac{1}{2} \int_\Omega \left[(y(T, x, \mathcal{G}+ \delta \mathcal{G})-y_{T}^\delta(x))^2  - (y(T, x, \mathcal{G})-y_{T}^\delta(x))^2 \right] \,\d x \\
& \quad + \frac{1}{2} \int_\Gamma \left[(y_\Gamma(T, x, \mathcal{G}+ \delta \mathcal{G})-y_{T, \Gamma}^\delta(x))^2 - (y_\Gamma(T, x, \mathcal{G})-y_{T, \Gamma}^\delta)^2 \right] \,\d S.
\end{align*}
Using the identity
$$\frac{1}{2} \left[(x-z)^2-(y-z)^2\right]=(y-z)(x-y)+\frac{1}{2}(x-y)^2, \; x,y\in \mathbb{R},$$
in the last two terms, we obtain
\begin{align}
\delta \mathcal{J}(\mathcal{G}) &= \int_\Omega (y(T, x, \mathcal{G})-y_{T}^\delta(x)) \delta y(T, x, \mathcal{G}) \,\d x +\frac{1}{2} \int_\Omega [\delta y(T, x, \mathcal{G})]^2 \,\d x \label{2eq2.19}\\
& \hspace{-0.3cm} + \int_\Gamma (y_\Gamma(T, x, \mathcal{G})-y_{T, \Gamma}^\delta(x)) \delta y_\Gamma(T, x, \mathcal{G}) \,\d S +\frac{1}{2} \int_\Gamma [\delta y_\Gamma(T, x, \mathcal{G})]^2 \,\d S \label{2eq2.20},
\end{align}
We rewrite the first integral in the right-hand side of \eqref{2eq2.19} using $\Phi(t,\cdot,\mathcal{G})$ and $\delta Y(t,\cdot, \mathcal{G})$, the mild solutions of \eqref{aeq1to5} and \eqref{eq12} respectively. We have
\begin{align}
&\int_\Omega (y(T, x, \mathcal{G})-y_{T}^\delta(x)) \delta y(T, x, \mathcal{G}) \,\d x \nonumber =\int_\Omega \varphi(T, x, \mathcal{G}) \delta y(T, x, \mathcal{G}) \,\d x \nonumber\\
&= \bigintss_\Omega \left[\int_0^T \partial_t (\varphi(t, x, \mathcal{G}) \delta y(t, x, \mathcal{G})) \,\d t \right]\,\d x+\int_{\Omega}\varphi(0,x)\,\delta f(x)\d x \nonumber\\
&= \int_{\Omega_T} \left[(\partial_t \varphi) \delta y + \varphi \partial_t(\delta y) \right] \,\d x \,\d t+\int_{\Omega}\varphi(0,x)\,\delta f(x)\d x \nonumber\\
&= \int_{\Omega_T} \left[ (-d \Delta \varphi +a(x)\varphi) \delta y + \varphi (d \Delta (\delta y)-a(x)(\delta y)) \right]\,\d x \,\d t \nonumber\\
& \qquad +\int_{\Omega}\varphi(0,x)\,\delta f(x)\d x \nonumber\\
&= \int_{\Omega_T} -d[(\Delta \varphi) \delta y -\Delta(\delta y)\varphi ]\,\d x \, \d t +\int_{\Omega}\varphi(0,x)\,\delta f(x)\d x \nonumber\\
&= \int_{\Gamma_T} -d[(\partial_\nu \varphi) \delta y_\Gamma -\varphi_\Gamma \partial_\nu  (\delta y)] \,\d S \, \d t +\int_{\Omega}\varphi(0,x)\,\delta f(x)\d x \label{2eq2.25}.
\end{align}
Similarly, for the first integral in the right-hand side of \eqref{2eq2.20}, we obtain
\begin{align}
&\int_\Gamma (y_\Gamma(T, x, \mathcal{G})-y_{T, \Gamma}^\delta(x)) \delta y_\Gamma(T, x, \mathcal{G}) \,\d S \nonumber\\
&= \int_{\Gamma_T} -\gamma[(\Delta_\Gamma \varphi_\Gamma) \delta y_\Gamma -\Delta_\Gamma(\delta y_\Gamma)\varphi_\Gamma] \,\d S \,\d t \label{2eq2.26}\\
&\qquad + \int_{\Gamma_T} d[(\partial_\nu \varphi) \delta y_\Gamma -\varphi_\Gamma \partial_\nu  (\delta y)] \,\d S \,\d t + \int_{\Gamma} \varphi_\Gamma(0,x)\, \delta g(x) \,\d S \nonumber\\
&= \int_{\Gamma_T} d[(\partial_\nu \varphi) \delta y_\Gamma -\varphi_\Gamma \partial_\nu  (\delta y)] \,\d S \,\d t+ \int_{\Gamma} \varphi_\Gamma(0,x) \,\delta g(x) \,\d S  \label{2eq2.27}
\end{align}
The first integral in the right-hand side of \eqref{2eq2.26} is null by the surface divergence formula \eqref{sdt}. Adding up the two integrals \eqref{2eq2.25} and \eqref{2eq2.27}, we obtain
\begin{align}
&\int_\Omega (y(T, x, \mathcal{G})-y_{T}^\delta(x)) \delta y(T, x, \mathcal{G}) \,\d x + \int_\Gamma (y_\Gamma(T, x, \mathcal{G})-y_{T, \Gamma}^\delta(x)) \delta y_\Gamma(T, x, \mathcal{G}) \,\d S \nonumber\\
&= \int_{\Omega_T}\varphi(0,x) \, \delta f(x)\,\d x + \int_{\Gamma_T}\varphi_\Gamma(0,x) \, \delta g(x)\,\d S . \nonumber
\end{align}
The second integrals in the right-hand sides of \eqref{2eq2.19} and \eqref{2eq2.20} are of order $\mathcal{O}\left(\|\delta \mathcal{G}\|^2_{\mathbb{L}^2}\right)$. In fact, multiplying \eqref{eq12}$_1$ by $\delta y$, \eqref{eq12}$_2$ by $\delta y_\Gamma$ and using Green and surface divergence formula, we obtain
\begin{equation}
    \dfrac{1}{2}\partial_t\left(\int_{\Omega} |\delta y|^2\, \d x  \right)+d\int _{\Omega}|\nabla(\delta y)|^2\,\d x+\dfrac{d}{2}\int_{\Gamma}\partial_{\nu}(|\delta y|^2)\, \d S+\int_{\Omega} a |\delta y|^2 \, \d x =0,
    \label{equa1}
\end{equation}
\begin{equation}
    \dfrac{1}{2}\partial_t\left(\int_{\Gamma} |\delta y_{\Gamma}|^2\, \d S  \right)+\gamma\int _{\Gamma}|\nabla_{\Gamma}(\delta y_{\Gamma})|^2\,\d S-\dfrac{d}{2}\int_{\Gamma}\partial_{\nu}(|\delta y|^2)\, \d S+\int_{\Gamma} b |\delta y_{\Gamma}|^2 \, \d S=0.
    \label{equa2}
\end{equation}
Combining \eqref{equa1} and \eqref{equa2}, we arrive at
\begin{align*}
    &\dfrac{1}{2}\dfrac{\d}{\d t}\|\delta Y(t)\|_{\mathbb{L}^2}^2+d\int _{\Omega}|\nabla(\delta y)|^2\,\d x+\gamma\int _{\Gamma}|\nabla_{\Gamma}(\delta y_{\Gamma})|^2\,\d S+\int_{\Omega} a |\delta y|^2 \, \d x \\
    &\qquad+\int_{\Gamma} b |\delta y_{\Gamma}|^2 \, \d S=0.
\end{align*}
By Cauchy-Schwarz inequality, we obtain 
\begin{equation*}
    \dfrac{1}{2}\dfrac{\d}{\d t}\|\delta Y(t)\|_{\mathbb{L}^2}^2 +\min (d,\gamma)\|(\nabla(\delta y),\nabla_{\Gamma}(\delta y_{\Gamma}))\|_{\mathbb{L}^2}^2\leq D\|\delta Y(t)\|_{\mathbb{L}^2}^2.
\end{equation*}
Therefore, 
\begin{equation*}
    \dfrac{1}{2}\dfrac{\d}{\d t}\|\delta Y(t)\|_{\mathbb{L}^2}^2 \leq D\|\delta Y(t)\|_{\mathbb{L}^2}^2.
\end{equation*}
Then Gronwall inequality imply that 
\begin{equation}
    \|\delta Y(t)\|_{\mathbb{L}^2}^2 \leq \mathrm{e}^{2DT}\|\delta Y(0)\|_{\mathbb{L}^2}^2=\mathrm{e}^{2DT}\|\delta \mathcal{G}\|_{\mathbb{L}^2}^2,
    \label{inequa1}
\end{equation}
for every $t\in [0,T]$. Consequently,
$$\frac{1}{2} \int_\Omega [\delta y(T, x, \mathcal{G})]^2 \,\d x +\frac{1}{2} \int_\Gamma [\delta y_\Gamma(T, x, \mathcal{G})]^2 \,\d S =\mathcal{O}\left(\|\delta \mathcal{G}\|^2_{\mathbb{L}^2}\right).$$
This completes the proof.
\end{proof}

Now, we show the Lipschitz continuity of the Fréchet gradient $\mathcal{J}'$.
\begin{lemma}\label{2lem2.8}
Let $\mathcal{G}, \delta \mathcal{G}\in \mathbb{L}^2$. The Fréchet gradient $\mathcal{J}'$ is Lipschitz continuous,
\begin{equation*}
\|\mathcal{J}'(\mathcal{G}+ \delta \mathcal{G})-\mathcal{J}'(\mathcal{G})\|_{\mathbb{L}^2} \leq L \|\delta \mathcal{G}\|_{\mathbb{L}^2}, 
\end{equation*}
where the Lipschitz constant $L>0$ depends on $T$ and $D$ as follows
\begin{equation}\label{lip}
L=\sqrt{\mathrm{e}^{2DT}\left(1+2TD\mathrm{e}^{2TD }\right)}. 
\end{equation}
\end{lemma}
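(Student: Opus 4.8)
The plan is to reduce the whole estimate to an energy bound for the backward adjoint system and then chain it with the forward sensitivity bound \eqref{inequa1} already obtained in the proof of Proposition \ref{2prop2.6}. By the gradient formula \eqref{2eq2.11} we have
$$\mathcal{J}'(\mathcal{G}+\delta\mathcal{G})-\mathcal{J}'(\mathcal{G})=\Phi(\cdot,0,\mathcal{G}+\delta\mathcal{G})-\Phi(\cdot,0,\mathcal{G}).$$
Since the adjoint system \eqref{aeq1to5} is linear and its terminal data are $y(T,\cdot,\mathcal{G})-y_T^\delta$ and $y_\Gamma(T,\cdot,\mathcal{G})-y_{T,\Gamma}^\delta$, the noisy data cancel in the difference, so $\delta\Phi:=(\delta\varphi,\delta\varphi_\Gamma)$ solves the homogeneous adjoint equations with terminal data $\delta\varphi(T)=\delta y(T,\cdot,\mathcal{G})$ and $\delta\varphi_\Gamma(T)=\delta y_\Gamma(T,\cdot,\mathcal{G})$, where $\delta Y$ is the sensitivity solution of \eqref{eq12}. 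Thus it suffices to bound $\|\delta\Phi(0)\|_{\mathbb{L}^2}$ by $\|\delta Y(T)\|_{\mathbb{L}^2}$ and then invoke \eqref{inequa1}.

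The core step is an energy estimate for $\delta\Phi$. Setting $\mathcal{E}_\ast(t)=\tfrac12\|\delta\Phi(t)\|_{\mathbb{L}^2}^2$, I would multiply $\eqref{aeq1to5}_1$ by $\delta\varphi$ and $\eqref{aeq1to5}_2$ by $\delta\varphi_\Gamma$, integrate over $\Omega$ and $\Gamma$, and apply Green's formula together with the surface divergence formula \eqref{sdt}. Exactly as in the logarithmic-convexity computation, the two boundary integrals involving $d\,\partial_\nu(\delta\varphi)$ cancel by the compatibility condition $\delta\varphi_\Gamma=\delta\varphi_{|\Gamma}$, leaving
$$\mathcal{E}_\ast'(t)=d\int_\Omega|\nabla\delta\varphi|^2\,\d x+\gamma\int_\Gamma|\nabla_\Gamma\delta\varphi_\Gamma|^2\,\d S+\int_\Omega a\,(\delta\varphi)^2\,\d x+\int_\Gamma b\,(\delta\varphi_\Gamma)^2\,\d S.$$
The main obstacle is the positive sign in front of the Dirichlet energy terms: this is the price of running backward in time, and these gradient terms are not controlled by $\mathcal{E}_\ast$.

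To circumvent this I would use the identity above twice. First, discarding the nonnegative gradient terms and bounding the potentials by $D$ gives $\mathcal{E}_\ast'(t)\ge -2D\,\mathcal{E}_\ast(t)$, so that $t\mapsto \mathrm{e}^{2Dt}\mathcal{E}_\ast(t)$ is nondecreasing and hence
$$\|\delta\Phi(t)\|_{\mathbb{L}^2}^2\le \mathrm{e}^{2DT}\,\|\delta\Phi(T)\|_{\mathbb{L}^2}^2,\qquad 0\le t\le T.$$
Second, integrating the energy identity over $[0,T]$, discarding again the gradient terms and bounding the potentials, and then inserting the preliminary bound inside the integral, I obtain
$$\|\delta\Phi(0)\|_{\mathbb{L}^2}^2\le \|\delta\Phi(T)\|_{\mathbb{L}^2}^2+2D\int_0^T\|\delta\Phi(t)\|_{\mathbb{L}^2}^2\,\d t\le\big(1+2TD\,\mathrm{e}^{2DT}\big)\|\delta\Phi(T)\|_{\mathbb{L}^2}^2.$$

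Finally, since $\delta\Phi(T)=\delta Y(T,\cdot,\mathcal{G})$, the forward estimate \eqref{inequa1} evaluated at $t=T$ yields $\|\delta\Phi(T)\|_{\mathbb{L}^2}^2\le \mathrm{e}^{2DT}\|\delta\mathcal{G}\|_{\mathbb{L}^2}^2$. Combining the last two displays with \eqref{2eq2.11} gives
$$\|\mathcal{J}'(\mathcal{G}+\delta\mathcal{G})-\mathcal{J}'(\mathcal{G})\|_{\mathbb{L}^2}^2\le \mathrm{e}^{2DT}\big(1+2TD\,\mathrm{e}^{2DT}\big)\|\delta\mathcal{G}\|_{\mathbb{L}^2}^2,$$
which is precisely \eqref{lip} after taking square roots. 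The only delicate points are the backward-in-time energy estimate with its wrong-signed diffusion, and arranging the two applications of the energy identity so as to reach exactly the stated constant $L$.
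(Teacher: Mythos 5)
Your proof is correct and follows essentially the same route as the paper's: the same reduction of $\mathcal{J}'(\mathcal{G}+\delta\mathcal{G})-\mathcal{J}'(\mathcal{G})$ to the backward adjoint difference $\delta\Phi$ with terminal data $\delta Y(T,\cdot,\mathcal{G})$, the same energy identity exploited twice (once to show that $\mathrm{e}^{2Dt}\|\delta\Phi(t)\|_{\mathbb{L}^2}^2$ is nondecreasing, once integrated over $[0,T]$), and the same final chaining with the forward sensitivity bound \eqref{inequa1}. The constant you arrive at matches \eqref{lip} exactly.
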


\begin{proof}
By Proposition \eqref{2prop2.6}, we have 
\begin{equation}
    \|\mathcal{J}'(\mathcal{G}+ \delta \mathcal{G})-\mathcal{J}'(\mathcal{G})\|^2_{\mathbb{L}^2}:=\|\delta \Phi(0,\cdot,\mathcal{G})\|_{\mathbb{L}^2}^2,
    \label{equa5}
\end{equation}
where $\delta \Phi(t, \cdot, \mathcal{G}):=\Phi(t, \cdot, \mathcal{G}+\delta\mathcal{G})-\Phi(t, \cdot, \mathcal{G})$ be the strong solution of the adjoint system
\begin{empheq}[left = \empheqlbrace]{alignat=2}
\begin{aligned}
&-\partial_t (\delta \varphi) -d \Delta (\delta \varphi)+ a(x)(\delta \varphi) = 0, &\qquad \text{in } \Omega_T , \\
&-\partial_t (\delta \varphi_{\Gamma}) -\gamma \Delta_{\Gamma} (\delta \varphi_{\Gamma}) + d\partial_{\nu} (\delta \varphi) + b(x)(\delta \varphi_{\Gamma}) = 0, &\qquad \text{on } \Gamma_T, \\
&\delta \varphi_{\Gamma}(t,x) = (\delta \varphi)_{|\Gamma}(t,x), &\qquad \text{on } \Gamma_T, \\
&(\delta \varphi, \delta \varphi_{\Gamma})\rvert_{t=T}=(\delta y, \delta y_{\Gamma})\rvert_{t=T},   &\qquad \Omega \times\Gamma.
\label{2aeq1to4}
\end{aligned}
\end{empheq}
Next, we estimate the norm $\|\delta \Phi(0,\cdot,\mathcal{G})\|_{\mathbb{L}^2}^2$. In the adjoint system \eqref{2aeq1to4}, multiplying the first equation by $\delta \varphi$ and the second by $\delta \varphi_\Gamma$, we obtain the following identities
\begin{align*}
& -\frac{1}{2} \frac{\mathrm{d}}{\mathrm{d}t} \left( \int_\Omega |\delta \varphi|^2 \,\d x\right) +d\int_\Omega |\nabla(\delta \varphi)|^2 \,\d x -\frac{1}{2} \int_\Gamma d\partial_\nu (|\delta \varphi|^2) \,\d S + \int_\Omega a |\delta \varphi|^2 \,\d x =0, \\
& -\frac{1}{2} \frac{\mathrm{d}}{\mathrm{d}t} \left( \int_\Gamma |\delta \varphi_\Gamma|^2 \,\d S\right) +\gamma  \int_\Gamma |\nabla_\Gamma(\delta \varphi_\Gamma)|^2 \,\d S +\frac{1}{2} \int_\Gamma d\partial_\nu (|\delta \varphi|^2) \,\d S + \int_\Gamma b |\delta \varphi_\Gamma|^2 \,\d S =0, 
\end{align*}
where we employed Green and the surface divergence formulae. Combining the last identities, we obtain
\begin{align}
&\frac{1}{2} \frac{\mathrm{d}}{\mathrm{d}t} \left( \int_\Omega |\delta \varphi|^2 \,\d x + \int_\Gamma |\delta \varphi_\Gamma|^2 \,\d S\right)-\int_\Omega a |\delta \varphi|^2 \,\d x -\int_\Gamma b |\delta \varphi_\Gamma|^2 \,\d S  \nonumber\\
&= d\int_\Omega |\nabla(\delta \varphi)|^2 \,\d x + \gamma  \int_\Gamma |\nabla_\Gamma(\delta \varphi_\Gamma)|^2 \,\d S .
\label{equa3}
\end{align}
Then
\begin{align*}
& \dfrac{\mathrm{d}}{\mathrm{d}t} \|\delta \Phi (t,\cdot,\mathcal{G})\|_{\mathbb{L}^2}^2-2\int_\Omega a |\delta \varphi|^2 \,\d x -2\int_\Gamma b |\delta \varphi_\Gamma|^2 \,\d S \geq 0.
\end{align*}
Integrating between $0$ and $T$ and using the fact that $\delta \Phi(T,\cdot,\mathcal{G})=\delta Y(T,\cdot,\mathcal{G})$, we obtain
\begin{equation}
\|\delta \Phi (0,\cdot,\mathcal{G})\|_{\mathbb{L}^2}^2\leq \|\delta Y(T,\cdot,\mathcal{G})\|_{\mathbb{L}^2}^2+2D\|\Phi\|_{\mathbb{L}_{T}^2}^2.
\label{inequa3}
\end{equation}
Now, we estimate $\|\Phi\|_{\mathbb{L}_{T}^2}^2$, by \eqref{equa3}, we obtain
\begin{equation*}
    \dfrac{\mathrm{d}}{\mathrm{d}t}\|\delta \Phi (t,\cdot,\mathcal{G})\|_{\mathbb{L}^2}^2+2D\|\delta \Phi (t,\cdot,\mathcal{G})\|_{\mathbb{L}^2}^2\geq 0.
\end{equation*}
This inequality implies that the function $h$ define by 
\begin{equation*}
    h(t)=\mathrm{e}^{2D t}\|\delta \Phi (t,\cdot,\mathcal{G})\|_{\mathbb{L}^2}^2,
\end{equation*}
is non-decreasing on $[0,T]$. Therefore,
\begin{align*}
    \|\delta \Phi (t,\cdot,\mathcal{G})\|_{\mathbb{L}_{T}^2}^2&= \int_{0}^T \|\delta \Phi (t,\cdot,\mathcal{G})\|_{\mathbb{L}^2}^2 \, \d t\\
    & \leq \int_{0}^T \mathrm{e}^{2Dt } \|\delta \Phi (t,\cdot,\mathcal{G})\|_{\mathbb{L}^2}^2 \, \d t\\
    & \leq T\mathrm{e}^{2DT }\|\delta \Phi (T,\cdot,\mathcal{G})\|_{\mathbb{L}^2}^2.
\end{align*}
Using the last inequality and \eqref{2aeq1to4}, we arrive at
\begin{equation}
    \|\delta \Phi (t,\cdot,\mathcal{G})\|_{\mathbb{L}_{T}^2}^2 \leq T\mathrm{e}^{2DT }\|\delta Y (T,\cdot,\mathcal{G})\|_{\mathbb{L}^2}^2.
    \label{inequa4}
\end{equation}
From \eqref{inequa3} and \eqref{inequa4}, we obtain 
\begin{equation*}
\|\delta \Phi (0,\cdot,\mathcal{G})\|_{\mathbb{L}^2}^2\leq  \left(1+2TD\mathrm{e}^{2DT }\right)\|Y(T,\cdot,\mathcal{G})\|_{\mathbb{L}^2}^2.
\end{equation*}
Using inequality \eqref{inequa1}, we obtain
\begin{align*}
&\|\delta \Phi (0,\cdot,\mathcal{G})\|_{\mathbb{L}^2}^2\leq \mathrm{e}^{2DT} \left(1+2TD\mathrm{e}^{2DT }\right)\|\delta \mathcal{G}\|_{\mathbb{L}^2}^2.
\end{align*}
With the equality \eqref{equa5}, this implies that
\begin{align*}
&\|\mathcal{J}'(\mathcal{G}+ \delta \mathcal{G})-\mathcal{J}'(\mathcal{G})\|^2_{\mathbb{L}^2_T} \leq 2 \mathrm{e}^{2DT} \left(1+2TD\mathrm{e}^{2DT }\right)\|\delta \mathcal{G}\|_{\mathbb{L}^2}^2.
\end{align*}
This yields the desired result. 
\end{proof}

\section{Spectral analysis of the BPP}
Here, we will give a necessary and sufficient condition for the existence of the solution to BPP. We also provide a series representation of the unique solution to \eqref{regpb} for the system \eqref{eq1to4} when $a=0$ and $b=0$. For this purpose, we need the following lemma.
\begin{lemma}
The input-output operator $\Psi \colon \mathbb{L}^2 \longrightarrow \mathbb{L}^2$ defined in \eqref{2eq2.3} is self-adjoint. Moreover,
\begin{equation}\label{eq5.3}
   \left( \Psi\phi_k \right)(x) = \mathrm{e}^{- \lambda_{k} T} \phi_k (x) \qquad \text{ in } \Omega\times \Gamma,
\end{equation}
where $(\phi_k)_{k\ge 1}$ is the sequence pf eigenfunctions of the operator $-\mathcal{A}_0$ corresponding to the eigenvalues $(\lambda_k)_{k\geq1}$ given by Lemma \ref{lmsp1}.
\end{lemma}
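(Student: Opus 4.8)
The plan is to identify $\Psi$ with the value at time $T$ of the $C_0$-semigroup generated by $\mathcal{A}_0$ and then to exploit the spectral decomposition furnished by Lemma \ref{lmsp1}. In the present setting $a=b=0$, so $\mathcal{A}=\mathcal{A}_0$ and the mild solution of (ACP) is $Y(t,\cdot,\mathcal{G})=\mathrm{e}^{t\mathcal{A}_0}\mathcal{G}$; in particular $\Psi\mathcal{G}=\mathrm{e}^{T\mathcal{A}_0}\mathcal{G}$. Since $-\mathcal{A}_0$ is self-adjoint, nonnegative and has compact resolvent (as recalled in the proof of Lemma \ref{lmsp1}), the eigenfunctions $(\phi_k)_{k\ge1}$ form an orthonormal basis of $\mathbb{L}^2$ and satisfy $\mathcal{A}_0\phi_k=-\lambda_k\phi_k$.

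First I would establish the eigenfunction relation \eqref{eq5.3} directly, since it also drives the self-adjointness. For $\mathcal{G}=\phi_k$ the function $t\mapsto \mathrm{e}^{-\lambda_k t}\phi_k$ solves (ACP): it takes the value $\phi_k$ at $t=0$ and satisfies $\partial_t(\mathrm{e}^{-\lambda_k t}\phi_k)=-\lambda_k\mathrm{e}^{-\lambda_k t}\phi_k=\mathcal{A}_0(\mathrm{e}^{-\lambda_k t}\phi_k)$. By uniqueness of the mild solution, $Y(t,\cdot,\phi_k)=\mathrm{e}^{-\lambda_k t}\phi_k$, and evaluating at $t=T$ gives $\Psi\phi_k=\mathrm{e}^{-\lambda_k T}\phi_k$, which is exactly \eqref{eq5.3}.

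Next, for self-adjointness, I would expand an arbitrary datum in the basis, $\mathcal{G}=\sum_{k\ge1}\langle\mathcal{G},\phi_k\rangle\phi_k$, and combine the boundedness of $\Psi$ (indeed its compactness, Lemma \ref{2lem2.4}) with \eqref{eq5.3} to obtain the series representation
\begin{equation*}
\Psi\mathcal{G}=\sum_{k\ge1}\mathrm{e}^{-\lambda_k T}\langle\mathcal{G},\phi_k\rangle\,\phi_k .
\end{equation*}
Because the multipliers $\mathrm{e}^{-\lambda_k T}$ are real, this diagonal operator is symmetric with respect to the orthonormal basis, so for all $\mathcal{G},\mathcal{H}\in\mathbb{L}^2$ one reads off $\langle\Psi\mathcal{G},\mathcal{H}\rangle=\sum_{k\ge1}\mathrm{e}^{-\lambda_k T}\langle\mathcal{G},\phi_k\rangle\langle\phi_k,\mathcal{H}\rangle=\langle\mathcal{G},\Psi\mathcal{H}\rangle$, whence $\Psi=\Psi^*$.

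The only point requiring genuine care is the passage from the abstract semigroup to the eigenfunction expansion, i.e. the claim that $\mathrm{e}^{T\mathcal{A}_0}$ acts diagonally as above; this is where I expect the main (routine but not vacuous) work to lie. It follows from the functional calculus for the self-adjoint generator $\mathcal{A}_0$ with compact resolvent, or more elementarily by interchanging the bounded operator $\Psi$ with the convergent eigenexpansion of $\mathcal{G}$ and invoking \eqref{eq5.3} termwise. An alternative route to self-adjointness, closer in spirit to Section \ref{sec3}, is to observe that the backward adjoint system \eqref{aeq1to5} with $a=b=0$ generates the adjoint semigroup $\mathrm{e}^{T\mathcal{A}_0^{*}}$; since $\mathcal{A}_0^{*}=\mathcal{A}_0$, the duality/integration-by-parts computation of Proposition \ref{2prop2.6} yields $\langle\Psi\mathcal{G},\mathcal{H}\rangle=\langle\mathcal{G},\Psi\mathcal{H}\rangle$ directly, at the cost of repeating that argument.
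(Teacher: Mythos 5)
Your proposal is correct and takes essentially the same approach as the paper: both diagonalize $\Psi$ in the eigenbasis $(\phi_k)_{k\ge 1}$ of $-\mathcal{A}_0$ and read off self-adjointness from the real multipliers $\mathrm{e}^{-\lambda_k T}$ via Parseval. The only difference is organizational — you establish \eqref{eq5.3} first (by uniqueness of mild solutions) and then justify the series representation of $\Psi\mathcal{G}$ by boundedness of $\Psi$, whereas the paper asserts the Fourier expansion of the solution outright and obtains \eqref{eq5.3} as a special case; your version is, if anything, slightly more careful on that point.
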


\begin{proof}
By virtue of the orthonormal basis $(\phi_k)_k \subset \mathbb{L}^2$, the solution of the system $\text{(ACP)}$ corresponding to the initial data $\mathcal{G}$ and the operator $\mathcal{A}_0$  can be written as
\begin{equation*}
  Y(t,x) = \sum_{k = 1}^{\infty} \left\langle \mathcal{G},\phi_k   \right\rangle \mathrm{e}^{- \lambda_{k} t} \phi_k \qquad \text{ in } \Omega\times \Gamma.
\end{equation*}
Then, the Fourier series representation of the input-output operator is given by
\begin{equation}\label{eq5.4}
(\Psi \mathcal{G})(x)=Y(T,x,\mathcal{G}) = \sum_{k = 1}^{\infty} \left\langle \mathcal{G},\Phi_k   \right\rangle  \mathrm{e}^{- \lambda_{k} T} \phi_k (x), 
\end{equation}
Thus,
\begin{equation*}
    (\Psi \phi_n)(x) = \mathrm{e}^{- \lambda_{n} T} \phi_n (x)
\end{equation*}
Moreover, the operator $\Psi$ is self adjoint. Indeed,
\begin{align*}
    \left\langle \Psi \mathcal{G}_{1} , \mathcal{G}_{2}\right\rangle &= \left\langle \sum_{k = 1}^{\infty} \left\langle \mathcal{G}_{1},\phi_k \right\rangle  \mathrm{e}^{- \lambda_{k} T} \phi_k , \sum_{k = 1}^{\infty} \left\langle \mathcal{G}_{2},\phi_k \right \rangle \phi_k \right \rangle\\
    &= \sum_{k = 1}^{\infty} \left\langle \mathcal{G}_{1},\phi_k   \right\rangle \left\langle \mathcal{G}_{2},\phi_k \right\rangle \mathrm{e}^{- \lambda_{k} T}\\
    &=\left\langle \mathcal{G}_{1} ,\Psi  \mathcal{G}_{2}\right\rangle.
\end{align*}
\end{proof}
Next, we discuss the solvability of the BPP.
\begin{theorem}
Let $Y_{T}:=\left(y(T,\cdot),y_\Gamma(T,\cdot)\right)\in \mathbb{L}^2$ be a noise free measured data. Then the BPP in \eqref{2eq2.3} admits a solution if and only if
$$\sum_{k = 1}^{\infty} \mathrm{e}^{\lambda_{k} T} Y_{T,k}^{2}  < \infty , \qquad \text{ where }\, Y_{T,k}:= \left \langle Y_{T} , \phi_k \right \rangle,$$
in that case the solution is given by$$\mathcal{G} = \sum_{k = 1}^{\infty} \mathrm{e}^{\lambda_{k} T} Y_{T,n} \phi_k.$$
\end{theorem}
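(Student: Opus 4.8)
The plan is to exploit the diagonalization of $\Psi$ established in the preceding lemma. Since $(\phi_k)_{k\ge 1}$ is an orthonormal basis of $\mathbb{L}^2$ and $\Psi\phi_k=\mathrm{e}^{-\lambda_k T}\phi_k$ with all eigenvalues $\mathrm{e}^{-\lambda_k T}$ strictly positive, the operator equation $\Psi\mathcal{G}=Y_T$ decouples completely in Fourier coordinates; this is exactly the setting of the Picard criterion for a compact self-adjoint operator. First I would write $\mathcal{G}=\sum_{k\ge 1}g_k\phi_k$ with $g_k=\langle\mathcal{G},\phi_k\rangle$ and $Y_T=\sum_{k\ge 1}Y_{T,k}\phi_k$, and insert these into the series representation \eqref{eq5.4} to get $\Psi\mathcal{G}=\sum_{k\ge 1}g_k\,\mathrm{e}^{-\lambda_k T}\phi_k$.

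Matching Fourier coefficients, the equation $\Psi\mathcal{G}=Y_T$ is then equivalent to $g_k\,\mathrm{e}^{-\lambda_k T}=Y_{T,k}$ for every $k\ge 1$, which forces $g_k=\mathrm{e}^{\lambda_k T}Y_{T,k}$ and hence identifies the unique candidate $\mathcal{G}=\sum_{k\ge 1}\mathrm{e}^{\lambda_k T}Y_{T,k}\phi_k$. Uniqueness of these coefficients is guaranteed because $\Psi$ is injective, all its eigenvalues being nonzero. I would then split into the two implications using Parseval's identity, noting that $\|\mathcal{G}\|_{\mathbb{L}^2}^2=\sum_{k\ge 1}g_k^2=\sum_{k\ge 1}\mathrm{e}^{2\lambda_k T}Y_{T,k}^2$.

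For necessity, if a solution $\mathcal{G}\in\mathbb{L}^2$ exists, its coefficients must be $g_k=\mathrm{e}^{\lambda_k T}Y_{T,k}$, and square-summability of the Fourier coefficients of an $\mathbb{L}^2$ element yields $\sum_{k\ge 1}\mathrm{e}^{2\lambda_k T}Y_{T,k}^2<\infty$. For sufficiency, assuming this series converges, the partial sums of $\sum_{k\ge 1}\mathrm{e}^{\lambda_k T}Y_{T,k}\phi_k$ form a Cauchy sequence in $\mathbb{L}^2$ by Parseval, so the series defines an element $\mathcal{G}\in\mathbb{L}^2$; applying $\Psi$ and using $\Psi\phi_k=\mathrm{e}^{-\lambda_k T}\phi_k$ recovers $\sum_{k\ge 1}Y_{T,k}\phi_k=Y_T$.

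The step needing the most care is the term-by-term application of $\Psi$ to the infinite series in the sufficiency direction. This interchange is justified because $\Psi$ is compact, hence bounded and continuous, so it commutes with the $\mathbb{L}^2$-limit of the partial sums. Beyond that, the argument is a routine Fourier computation, so I do not expect a genuine obstacle; the only conceptual subtlety is recognizing that the severe ill-posedness of the BPP is encoded precisely in the factor $\mathrm{e}^{2\lambda_k T}$, which blows up as $k\to\infty$ and makes the solvability condition highly restrictive.
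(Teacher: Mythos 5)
Your proof is correct and rests on the same spectral foundation as the paper's, namely the diagonalization $\Psi\phi_k=\mathrm{e}^{-\lambda_k T}\phi_k$ from the preceding lemma, but it is organized differently and is in fact more complete. The paper identifies the candidate solution by expanding the Tikhonov functional $\mathcal{J}_\varepsilon$ in the eigenbasis and setting each residual $\langle\mathcal{G},\phi_k\rangle \mathrm{e}^{-\lambda_k T}-Y_{T,k}$ to zero — silently dropping the $\varepsilon$-penalty term, so it is really just solving $\Psi\mathcal{G}=Y_T$ coefficientwise (the genuine minimizer of $\mathcal{J}_\varepsilon$ would have coefficients $\mathrm{e}^{-\lambda_k T}Y_{T,k}/(\mathrm{e}^{-2\lambda_k T}+\varepsilon)$) — and it never separately argues the two directions of the equivalence. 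You instead run the standard Picard-criterion argument: coefficient matching pins down the unique candidate $g_k=\mathrm{e}^{\lambda_k T}Y_{T,k}$ (uniqueness from injectivity of $\Psi$, all eigenvalues being nonzero), Parseval gives necessity, and for sufficiency you correctly justify the term-by-term application of $\Psi$ to the series via its boundedness. What your route buys is precisely the ``if and only if,'' which the paper's functional-minimization detour leaves implicit. One point worth flagging: your Parseval computation yields the condition $\sum_{k\ge 1}\mathrm{e}^{2\lambda_k T}Y_{T,k}^2<\infty$, whereas the theorem as printed states $\sum_{k\ge 1}\mathrm{e}^{\lambda_k T}Y_{T,k}^2<\infty$; these are not equivalent (take $Y_{T,k}^2=\mathrm{e}^{-3\lambda_k T/2}k^{-2}$, which satisfies the printed condition but not yours). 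Your exponent is the correct one, since $\|\mathcal{G}\|_{\mathbb{L}^2}^2=\sum_{k\ge 1}\bigl(\mathrm{e}^{\lambda_k T}Y_{T,k}\bigr)^2$ for the claimed solution, so the discrepancy is a typo in the paper's statement (a missing factor of $2$ in the exponent), not a gap in your argument.
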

\begin{proof}
Let $\mathcal{G} \in \mathbb{L}^2$. Using \eqref{eq5.4}, $\Psi \mathcal{G}$ is given by
\begin{equation*}
    Y(T,x,\mathcal{G}) = \sum_{k = 1}^{\infty} \left\langle \mathcal{G}, \phi_k \right\rangle  \mathrm{e}^{- \lambda_{k} T} \phi_k (x)\qquad \text{ in } \Omega\times \Gamma.
\end{equation*}
Moreover, the output data can be expressed as follows
\begin{equation*}
    Y_{T} = \sum_{k = 1}^{\infty}Y_{T,k}  \phi_k.
\end{equation*}
Then, the Tikhonov functional \eqref{regpb} is represented by
\begin{align*}
   \mathcal{J}_\varepsilon(\mathcal{G})&= \frac{1}{2} \norm{\sum_{k = 1}^{\infty} \left( \left\langle \mathcal{G},\phi_k \right\rangle  \mathrm{e}^{- \lambda_{k} T} -  \left\langle Y_{T},\phi_k   \right\rangle\right)\phi_k}^{2} + \frac{\varepsilon}{2} \norm{\sum_{k = 1}^{\infty} \left\langle \mathcal{G},\phi_k \right\rangle \phi_k }^{2}\\
   &= \frac{1}{2} \sum_{k = 1}^{\infty} \left( \left\langle \mathcal{G},\phi_k \right> \mathrm{e}^{- \lambda_{k} T} - Y_{T,k} \right)^{2} + \varepsilon \left\langle \mathcal{G},\phi_k \right\rangle^2.
\end{align*}
Clearly, this functional achieves its minimum value if 
$$\left\langle \mathcal{G},\phi_k \right\rangle \mathrm{e}^{- \lambda_{k} T} - Y_{T,k} = 0.$$
Then, the $k$th Fourier coefficient of the unique
minimum of the Tikhonov functional \eqref{regpb} is given by 
$$ \left\langle \mathcal{G},\Phi_k \right\rangle =  \mathrm{e}^{ \lambda_{k} T} Y_{T,k}.$$
As a result, we obtain
$$\mathcal{G} = \sum_{k = 1}^{\infty}  \left\langle \mathcal{G},\phi_k \right\rangle \phi_k = \sum_{k = 1}^{\infty} \mathrm{e}^{\lambda_{k} T} Y_{T,k} \phi_k.$$
\end{proof}

\section{Numerical reconstruction of initial temperatures}\label{sec5}
Now we are in position to present the CG algorithm for reconstructing initial temperatures. We will consider both 1-D and 2-D initial temperatures reconstruction.

\smallskip
\begin{algorithm}[H]\label{alg1}
\SetAlgoLined
 Set $n=0$ and choose an initial temperature $\mathcal{G}_0$\;
 Solve the direct problem \eqref{eq1to4} to obtain $Y(t,x,\mathcal{G}_0)$\;
 Knowing the computed $Y(T,x,\mathcal{G}_0)$ and the measured $Y_T^\delta$, solve the adjoint problem \eqref{aeq1to5} to obtain $\Phi(t,x,\mathcal{G}_0)$\;
 Compute the gradient $p_0=\mathcal{J}_\varepsilon'\left(\mathcal{G}_0\right)$ using \eqref{2eq2.11}\;
 Solve the direct problem \eqref{eq1to4} with initial datum $p_n$ to obtain the solution $\Psi p_k$\;
 Compute the relaxation parameter $\displaystyle \alpha_n =\frac{\|\mathcal{J}_\varepsilon'(\mathcal{G}_n)\|_{\mathbb{L}^2}^2} {\|\Psi p_n\|_{\mathbb{L}^2}^2 + \varepsilon \|p_n\|_{\mathbb{L}^2}^2}$ \;
 Find the next iteration $\mathcal{G}_{n+1}=\mathcal{G}_n- \alpha_n p_n$\;
 Stop the iteration process if the stopping criterion $\mathcal{J}_\varepsilon(\mathcal{G}_{n+1}) <e_\mathcal{J}$ holds. Otherwise, set $n:=n+1$ and compute
 $$\gamma_n=\frac{\|\mathcal{J}_\varepsilon'(\mathcal{G}_n)\|_{\mathbb{L}^2}^2}{\|\mathcal{J}_\varepsilon'(\mathcal{G}_{n-1})\|_{\mathbb{L}^2}^2} \qquad \text{and} \qquad p_n=\mathcal{J}_\varepsilon'(\mathcal{G}_n)+\gamma_n p_{n-1},$$
 and go to Step 6;
 \caption{CG algorithm}
\end{algorithm}
\smallskip

In all numerical simulations, we assume that the measured output data contain a random noise. Instead of using the exact data $Y_T$, we generate the noisy data by the following formula
$$Y_T^\delta(x) =Y_T(x) + p \times\|Y_T\|_{\mathbb{L}^2} \times \mathrm{RandomReal[]},$$
where $p$ stands for the percentage of the noise level, and the function RandomReal[] produces random real numbers.

Comparing to static boundary conditions, the numerical implementation of dynamic boundary conditions is quite complicated. For this aim, the numerical solutions for the direct problems and the corresponding adjoint problems will be obtained by using the method of lines.

\subsection{1-D reconstruction of initial temperatures}
Let $\Omega = (0,\ell),\; \ell>0$ and $T>0$. We seek to reconstruct the unknown initial temperature $g(x)\in L^2(0,\ell)$ in the 1-D heat equation with dynamic boundary conditions
\begin{empheq}[left = \empheqlbrace]{alignat=2}
\begin{aligned}
&y_{t}(t, x)-y_{x x}(t, x)=0, &&\qquad(t,x)\in (0,T)\times (0,\ell)  , \\
&y_t(t, 0) - y_{x}(t, 0)=0, &&\qquad t\in (0,T), \\
&y_t(t, \ell) + y_{x}(t, \ell)=0, &&\qquad t\in (0,T), \\
&y(0,x)=g(x), \quad (y(0,0), y(0, \ell))=(a,b), &&\qquad x\in (0,\ell).
\label{1deq1to4}
\end{aligned}
\end{empheq}
The problem then is to find the initial temperature $(g(x),a,b)$ by using the final time overdetermination data.

As above, $Y(t,x,\mathcal{G}):=\left(y(t,x),y(t,0),y(t,\ell)\right)$ denote the solution of the system \eqref{1deq1to4}, where $\mathcal{G}:=(g, a, b)$. The Tikhonov functional is given by
\begin{align}\label{eq5.2}
\mathcal{J}_\varepsilon(\mathcal{G})&=\frac{1}{2} \left\|Y(T, \cdot,\mathcal{G})-Y_{T}^\delta \right\|_{\mathbb{L}^2}^2 +\frac{\varepsilon}{2} \|\mathcal{G}\|_{\mathbb{L}^2}^2 \\
& \hspace{-1cm}= \frac{1}{2} \left(\left\|y(T, \cdot)-y_{T}^\delta\right\|_{L^2(0,\ell)}^2 + \left|y(T,0)-y_T^{0,\delta}\right|^2 + \left|y(T,\ell)-y_T^{\ell,\delta}\right|^2 + \varepsilon \|\mathcal{G}\|_{\mathbb{L}^2}^2\right), \notag
\end{align}
where $Y_{T}^\delta:=\left(y_{T}^\delta, y_T^{0,\delta}, y_T^{\ell,\delta}\right) \in \mathbb{L}^2$ is the observation data.

In this 1-D case, the governing operator takes the form
$$
\mathcal{A}_{0}:=\left(\begin{array}{ccc}
\partial_{xx} & 0 & 0 \\
\left.\partial_x\right|_{x=0} & 0 & 0 \\
-\left.\partial_x\right|_{x=\ell} & 0 & 0
\end{array}\right), \qquad D\left(\mathcal{A}_{0}\right)=\mathbb{H}^{2}=\left\{(f, f(0), f(\ell)): f \in H^{2}(0, \ell)\right\}.
$$

\begin{remark}
It is well-known that the decay to zero of singular values $\sigma_k$ of the input-output operator $\Psi$ indicates the degree of ill-posedness of the corresponding inverse problem. It has been shown in \cite[Proposition 5.3.1]{Kh'20} that the eigenvalues of $-\mathcal{A}_0$ are of order $\lambda_k=\mathcal{O}\left(k^2\right)$. It follows that the singular values $\sigma_k=\mathrm{e}^{-\lambda_{k} T}$ of $\Psi$ are of order $\mathcal{O}\left(\mathrm{e}^{-k}\right)$. This means that the BPP we consider is severely exponentially ill-posed.
\end{remark}

The adjoint system corresponding to \eqref{1deq1to4} is given by
\begin{empheq}[left = \empheqlbrace]{alignat=2}
\begin{aligned}
& \varphi_{t}(t, x)+\varphi_{x x}(t, x)=0, &&\hspace{-1cm} (t,x)\in (0,T)\times (0,\ell)  , \\
& \varphi_t(t, 0) + \varphi_{x}(t, 0)=0, && t\in (0,T), \\
& \varphi_t(t, \ell) - \varphi_{x}(t, \ell)=0, && t\in (0,T), \\
& \varphi(T, x)=y(T,x)-y_T^\delta(x), && x\in (0,\ell),\\
& (\varphi(T,0), \varphi(T, \ell))=\left(y(T,0)-y_T^{0,\delta},y(T,\ell)-y_T^{\ell,\delta}\right).
\label{1daeq1to4}
\end{aligned}
\end{empheq}
Similarly to Proposition \ref{2prop2.6}, we obtain the following formula for the gradient of $\mathcal{J}_\varepsilon$ 
\begin{equation*}
\mathcal{J}_\varepsilon'(\mathcal{G})(x)= \left(\varphi(0,x,\mathcal{G}),\varphi(0,0,\mathcal{G}),\varphi(0,\ell,\mathcal{G})\right) + \varepsilon \mathcal{G}.
\end{equation*}

Next, we discretize the space domain $[0,\ell]$ into a uniform grid $(x_j)_{j=0}^{N_x}$ of step $\Delta x=\frac{\ell}{N_x}$ using finite differences. Denoting by $y_j(t)=y(t,x_j)$, the the second-order derivative of $y$ in system \eqref{1deq1to4} is approximated as follows
$$y_{xx}(t,x_j) \approx \frac{y_{j-1}(t)-2 y_j(t) + y_{j+1}(t)}{(\Delta x)^2}, \qquad j=\overline{1, N_x-1}.$$
The first-order derivatives on the boundary can be approximated by
\begin{align*}
    y_x(t,0) &\approx \frac{y_1(t)-y_0(t)}{\Delta x}\\
    y_x(t,\ell) &\approx \frac{y_{N_x}(t)-y_{N_x-1}(t)}{\Delta x}.
\end{align*}
Now, it suffices to numerically solve the resulting ordinary differential system. This is done with help of the \texttt{Wolfram} language.

In the following, we will test the stability of our algorithm using some numerical experiments. For convenience, in all experiments, the system parameters are taken as follows
$$T=0.03, \quad \ell=1, \quad N_x=25, \quad a=b=0.$$
For each example, an illustration is provided for the convergence error
$$e(n,g_n):=\|\Psi (g_n,0,0) -Y_T\|_{L^2(0,1)\times \mathbb{R}^2}^2,$$
and the accuracy error $$E(n,g_n):=\|g-g_n\|_{L^2(0,1)}$$
in terms of the iteration number $n$.

\subsection*{Example 1}
We take the exact initial temperature as
$$g(x)= \frac{7}{24} \left(\sin(\pi x) +1-x\right) \left(\sin(\pi x) + \sqrt{x} \right), \quad x\in (0,1).$$

Let us set the starting iteration as $g_0=0$, the
regularization parameter $\varepsilon=10^{-8}$, and stopping parameter $e_\mathcal{J}=10^{-6}$. The algorithm stops at iteration $n=9$ for $p\in \{1\%,3\%,5\%\}$.

\begin{figure}[H]
    \centering
    {{\includegraphics[width=6.1cm]{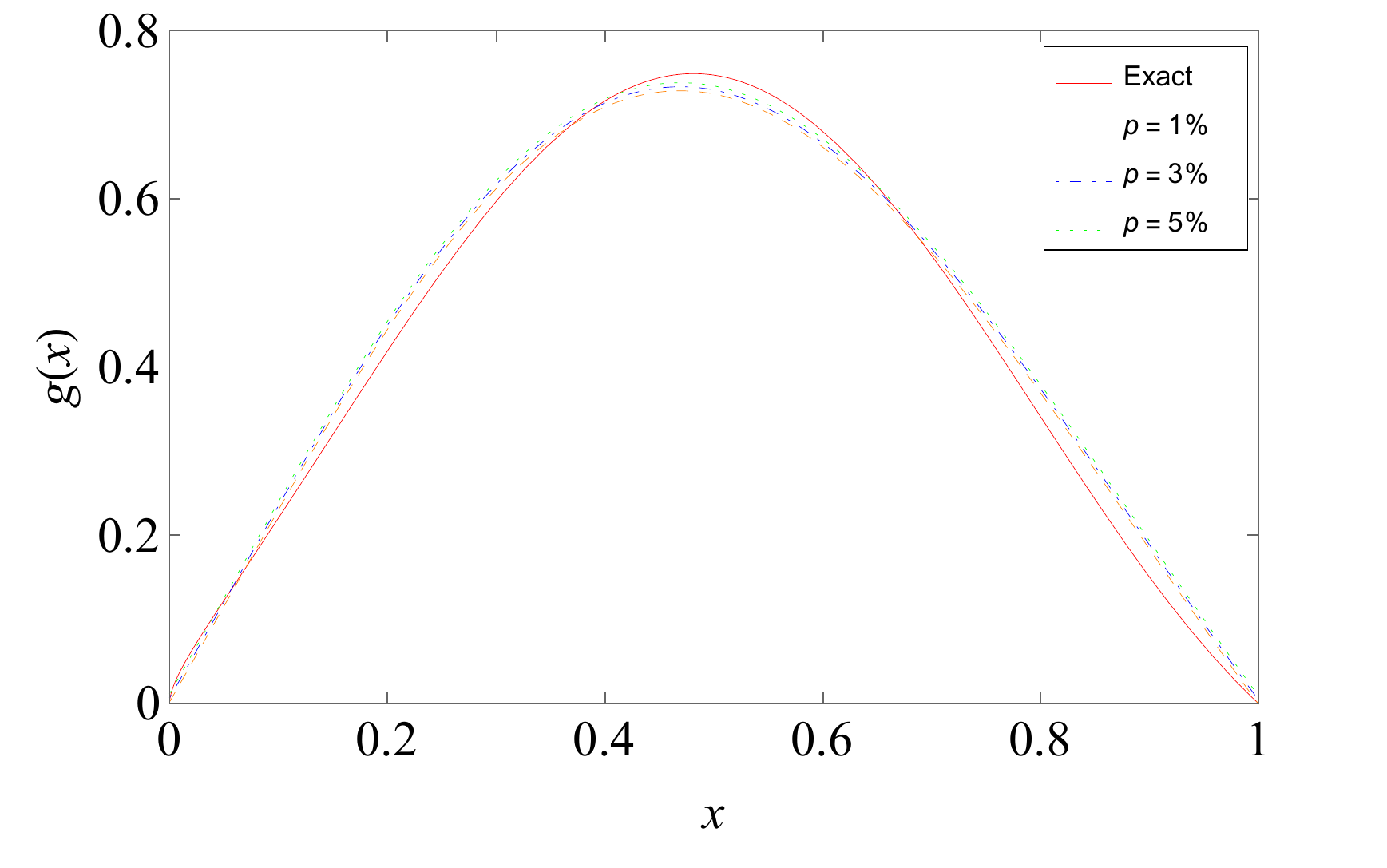} }}%
    \hspace{0.1cm}
    {{\includegraphics[width=6.1cm]{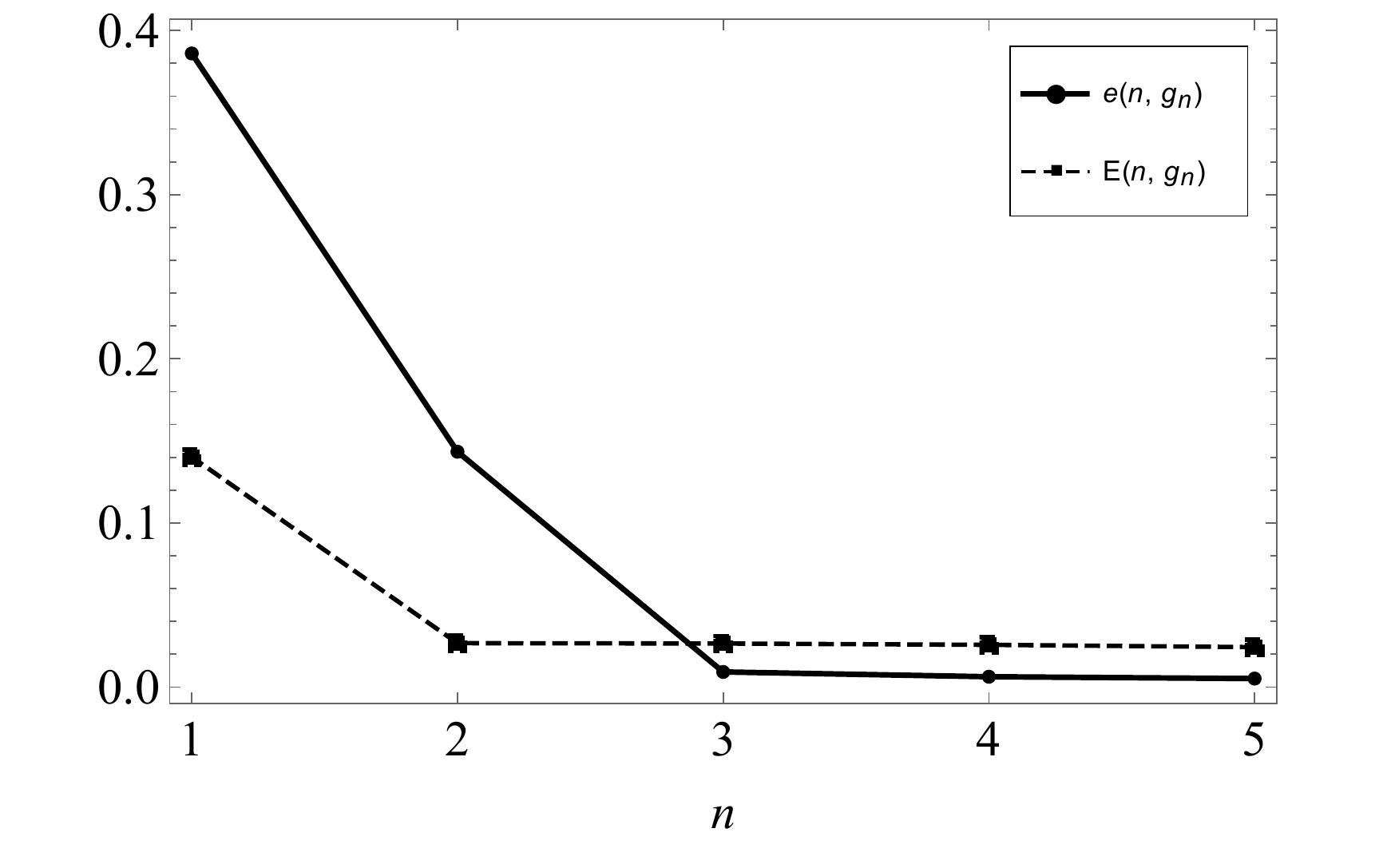} }}%
    \caption{Exact and recovered $g(x)$ from noisy data (left) and corresponding errors for first iterations for noise free data (right)}%
    \label{fig01}%
\end{figure}

\subsection*{Example 2}
We choose the exact initial temperature as
$$g(x)=6 (1-x)\log \left(1+x^2\right), \; x\in (0,1).$$

The initial iteration is $f_0=0$. The regularization parameter is $\varepsilon=10^{-8}$ and the stopping parameter is $e_\mathcal{J}=10^{-6}$. The algorithm stops at iterations $n\in \{39,42,43\}$ for $p\in \{1\%,3\%,5\%\}$, respectively.

\begin{figure}[H]
    \centering
    {{\includegraphics[width=6.1cm]{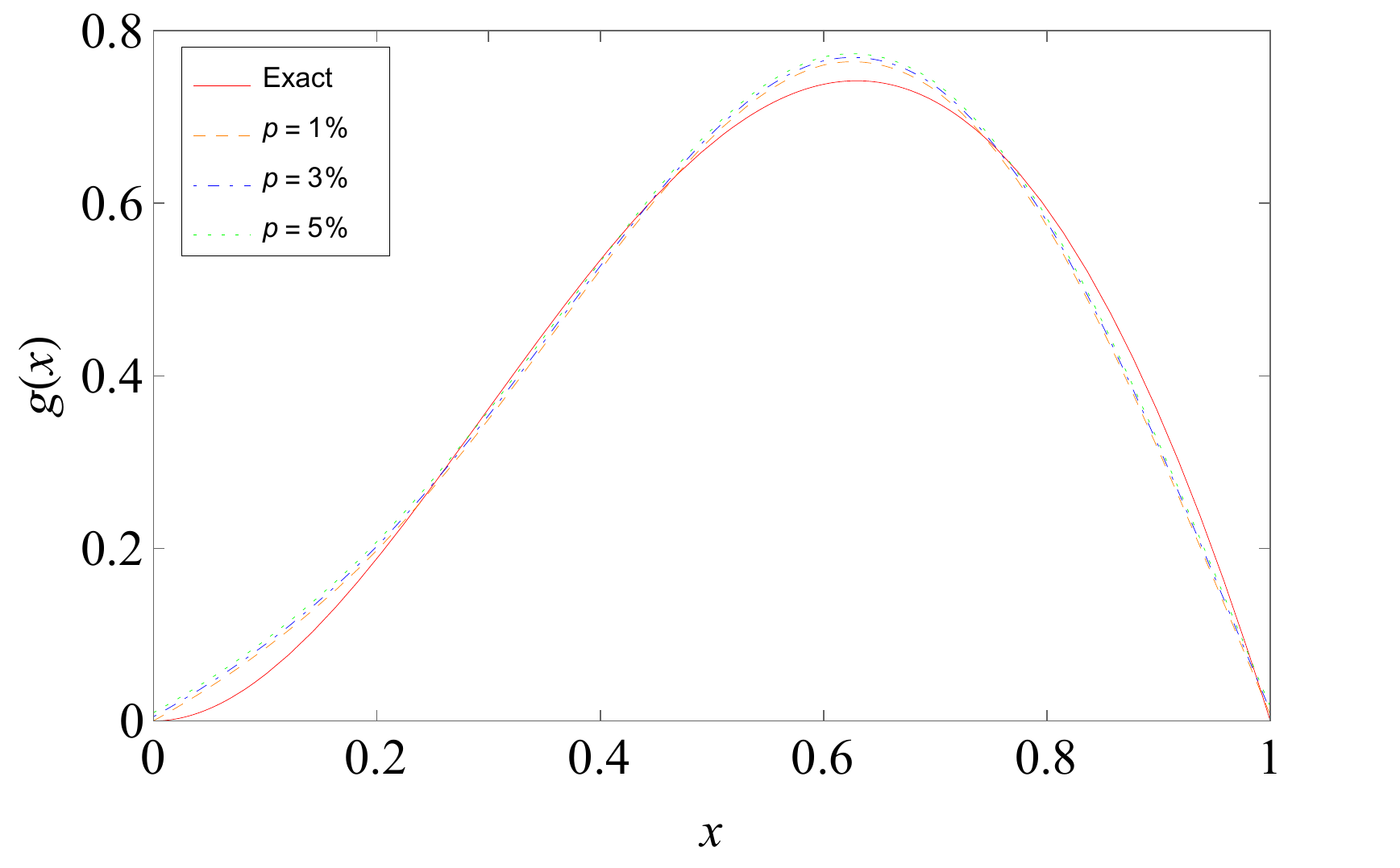} }}%
    \hspace{0.1cm}
    {{\includegraphics[width=6.1cm]{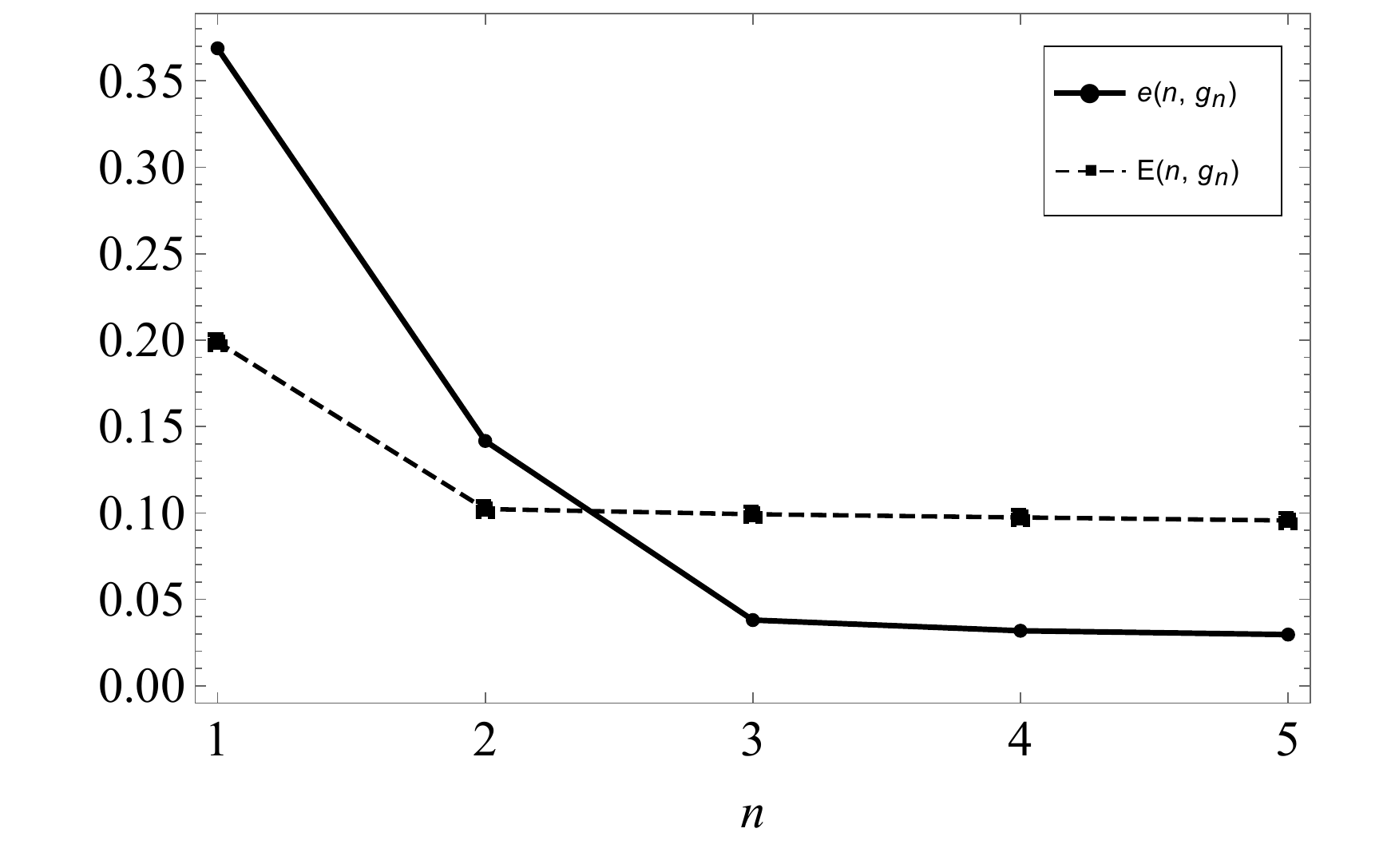} }}%
    \caption{Exact and recovered $g(x)$ from noisy data (left) and corresponding errors for first iterations for noise free data (right)}%
    \label{fig02}%
\end{figure}

\begin{remark}
The above numerical tests demonstrate the stability, the accuracy and the speed of the designed CG algorithm for the reconstruction of 1-D unknown initial temperatures in heat equation with dynamic boundary conditions. It is clear that the restoration of the initial temperature $g(x)$ is not much affected by random noise $p$. Furthermore, Figs. \ref{fig01} and \ref{fig02} show the decay of the convergence error and the accuracy error as the iteration number increases. This fact validates the convergence of the regularized solutions to the exact initial temperatures.
\end{remark}

\subsection{2-D reconstruction of initial temperatures}

Let $\Omega=\left\{(x_1,x_2)\in \mathbb{R}^2 \colon x_1^2 + x_2^2 <1 \right\}$ be the unit disk. Its boundary $\Gamma=\left\{(x_1,x_2)\in \mathbb{R}^2 \colon x_1^2 + x_2^2 =1 \right\}=\mathbb{S}^1$ is the unit circle. In polar coordinates,
$$\Omega=\{(r\cos\theta, r\sin\theta) \colon r\in [0,1),\theta \in [0,2\pi)\}\text{ and }\Gamma=\{(\cos\theta, \sin\theta) \colon \theta \in [0,2\pi)\}.$$
The gradient of any smooth function $y$ on $\Omega$ is given locally by
$$\nabla y=\partial_r y\, e_r +\frac{1}{r} \partial_\theta y\, e_\theta,$$
where $e_r$ and $e_\theta$ are the unit vectors in $r$ and in $\theta$ directions respectively. At any point on $\Gamma$, the outer unit normal vector is $\nu=e_r$. Then, by the previous formula the normal derivative of $y$ is given by
$$\partial_\nu y=\nabla y \cdot \nu \rvert_{\Gamma} = \partial_r y\, e_r \cdot e_r \rvert_{\Gamma}=\partial_r y \rvert_{r=1}.$$
The standard Laplacian $\Delta$ on $\Omega$ takes the form
$$\Delta y=\partial^2_r y+\frac{1}{r} \partial_r y+\frac{1}{r^2} \partial^2_\theta y.$$
For any $(x_1,x_2)=(\cos\theta, \sin\theta)\in \Gamma$, the tangent space at $(x_1,x_2)$ is $T_{(x_1,x_2)} \Gamma=\text{span}{(-\sin\theta, \cos\theta)}$. Denoting $\partial_\theta:=(-\sin\theta, \cos\theta)$, we have $|\partial_\theta|=1$. Then the metric tensor here is a scalar $\mathrm{g}=1$. Using the local formula \eqref{eqlb}, the Laplace-Beltrami operator on $\Gamma$ is given by
$$\Delta_\Gamma y_\Gamma =\partial^2_\theta y_\Gamma .$$
Consequently, a simplified version of system \eqref{eq1to4} in polar coordinates reads as follows 
\begin{empheq}[left = \empheqlbrace]{alignat=2}
\begin{aligned}
&\partial_t y -\left[\partial^2_r y+\frac{1}{r} \partial_r y+\frac{1}{r^2} \partial^2_\theta y\right]=0, &&\qquad 0<r<1, \; 0\le \theta \le 2 \pi \\
&\partial_t y_{\Gamma} -\partial^2_\theta y_\Gamma + \partial_r y \rvert_{r=1}=0, &&\qquad 0\le \theta \le 2 \pi, \\
& y_{\Gamma}(t,\theta) = y(t,r,\theta)\rvert_{r=1}, &&\qquad 0\le \theta \le 2 \pi,\\
&(y,y_{\Gamma})\rvert_{t=0}=(g, g_\Gamma), &&\qquad 0<r<1, \; 0\le \theta \le 2 \pi.
\label{2deq1to4}
\end{aligned}
\end{empheq}
Our purpose is to determine the initial temperature $g(r,\theta)$ using the final time overdetermination data. The corresponding adjoint system is given by
\begin{empheq}[left = \empheqlbrace]{alignat=2}
\begin{aligned}
& -\partial_t \varphi -\left[\partial^2_r \varphi+\frac{1}{r} \partial_r \varphi+\frac{1}{r^2} \partial^2_\theta \varphi\right]=0, &&\qquad 0<r<1, \; 0\le \theta \le 2 \pi \\
& -\partial_t \varphi_{\Gamma} -\partial^2_\theta \varphi_\Gamma + \partial_r \varphi \rvert_{r=1}=0, &&\qquad 0\le \theta \le 2 \pi, \\
& \varphi_{\Gamma}(t,\theta) = \varphi(t,r,\theta)\rvert_{r=1}, &&\qquad 0\le \theta \le 2 \pi,\\
& \varphi \rvert_{t=T}= y(T, \cdot, \mathcal{G})-y_{T}^\delta, &&\qquad 0<r<1, \; 0\le \theta \le 2 \pi, \\
& \varphi_{\Gamma}=y_\Gamma(T, \cdot, \mathcal{G})-y_{T,\Gamma}^\delta) , &&\qquad 0<r<1, \; 0\le \theta \le 2 \pi. \nonumber
\end{aligned}
\end{empheq}

Then the gradient formula for $\mathcal{J}_\varepsilon$ at $\mathcal{G}=(g,g_\Gamma)$ is given by
\begin{equation*}
\mathcal{J}_\varepsilon'(\mathcal{G})(r,\theta)= \left(\varphi(0,r,\theta,\mathcal{G}),\varphi(0,r,\theta,\mathcal{G})\rvert_{r=1}\right) + \varepsilon \mathcal{G}, \qquad 0<r<1, \; 0\le \theta \le 2 \pi.
\end{equation*}

To solve direct and adjoint problems, we discretize the space domain $\Omega$ into a uniform grid $\left(r_j, \theta_n \right), 0 \le j \le N_r, \, 0 \le n \le N_\theta$, of steps $\Delta r=\frac{1}{N_r}$ and $\Delta \theta=\frac{2\pi}{N_\theta}$ using finite differences (Fig. \ref{figpg}).

\begin{figure}[H] 
\centering
\includegraphics[scale=0.2]{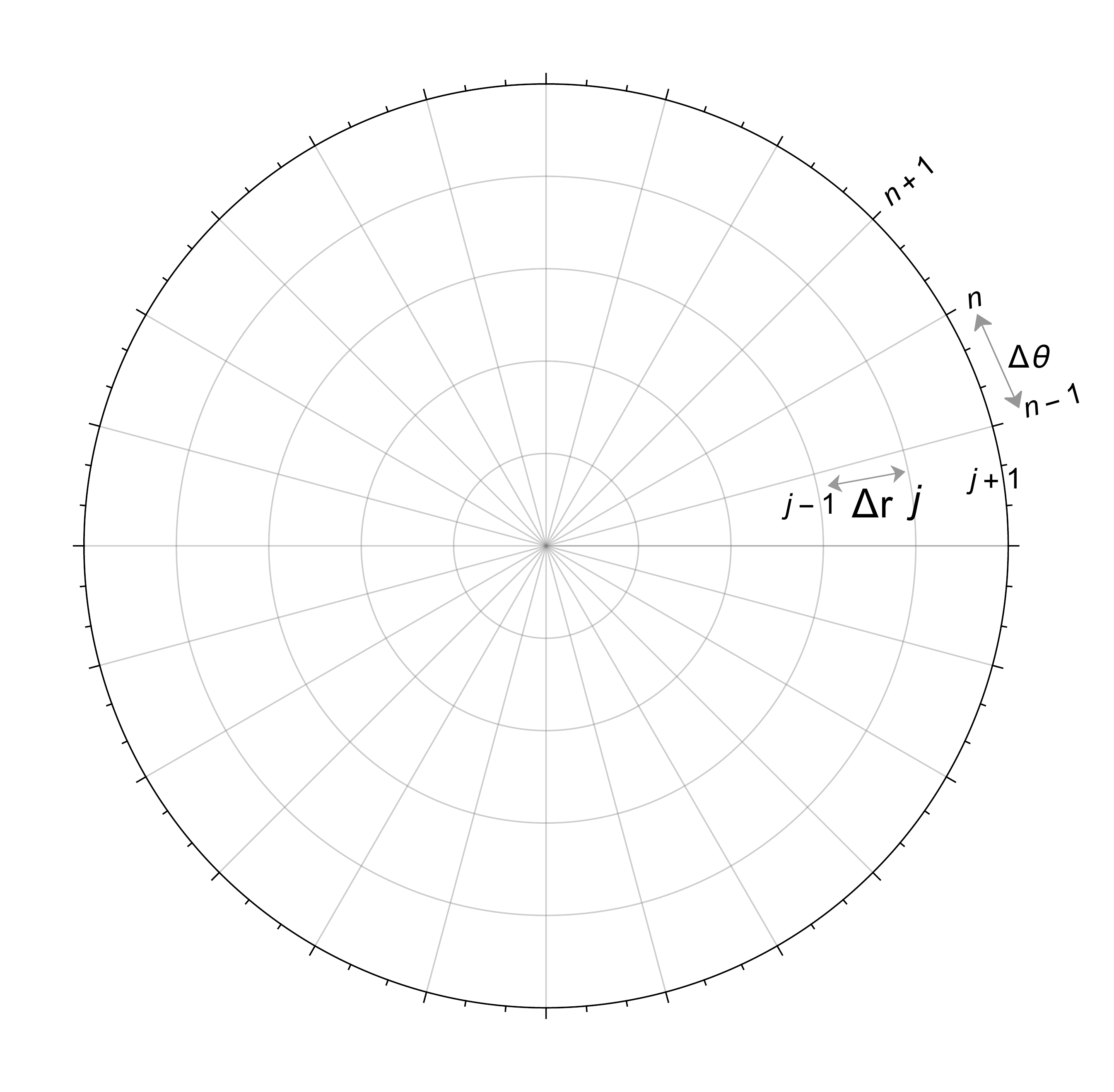}
\caption{Polar grid for the disk.} \label{figpg}
\end{figure}

Multiplying both sides of $\eqref{2deq1to4}_1$ by $r^2$, we obtain
\begin{equation}\label{eqdg}
    r^2\partial_t y -\left[r^2\partial^2_r y + r \partial_r y + \partial^2_\theta y\right]=0.
\end{equation}
Denoting by $y_{j,n}(t)=y(t,r_j, \theta_j)$, the the second-order partial derivatives of $y$ in system \eqref{2deq1to4} are approximated as follows 
\begin{align*}
    \partial_r^2 y(t,r_j, \theta_n) & \approx \frac{y_{j-1,n}(t)-2 y_{j,n}(t) + y_{j+1,n}(t)}{(\Delta r)^2}, \qquad j=\overline{1, N_r-1}\\
    \partial_\theta^2 y(t,r_j, \theta_n) & \approx \frac{y_{j,n-1}(t)-2 y_{j,n}(t) + y_{j,n+1}(t)}{(\Delta \theta)^2}, \qquad n=\overline{1, N_\theta-1}.
\end{align*}
The partial derivatives on the boundary are approximated by
\begin{align*}
    \partial_r y \rvert_{r=1} &\approx \frac{y_{N_r,n}(t)-y_{N_r-1,n}(t)}{\Delta r}, \qquad n=\overline{0, N_\theta}\\
    \partial_\theta^2 y_\Gamma(t,\theta_n) & \approx \frac{y_{N_r,n-1}(t)-2 y_{N_r,n}(t) + y_{N_r,n+1}(t)}{(\Delta \theta)^2}, \qquad n=\overline{1, N_\theta-1}.
\end{align*}
Note that the boundary condition for $\theta$ is given by periodicity $y(t,r,0)=y(t,r,2\pi)$ which corresponds to $y_{j,0}=y_{j,N_\theta}$. Since the system \eqref{2deq1to4} is singular at $r=0$, some boundedness condition should be added to avoid the blow up. We refer to \cite{Ya'13} for more details.

Henceforth, we take the final time $T=0.01$ and the mesh parameters $N_r=N_\theta=25$. We define the error surface as the difference between the exact initial temperature and the recovered one.
\subsection*{Example 1}
We take the exact initial temperature as the separate variables function
$$g(r,\theta)= \sin(\pi r) \sin \left(\frac{\theta}{2}\right), \quad r\in (0,1), \; \theta\in [0,2\pi].$$

We set the initial iteration as $g_0=0$, the
regularization parameter $\varepsilon=10^{-8}$, and the stopping parameter $e_\mathcal{J}=5.82\times 10^{-5}$. The algorithm stops at iteration $n=32$ for a noise level $p=1\%$.

\begin{figure}[H]
    \centering
    {{\includegraphics[width=6.1cm]{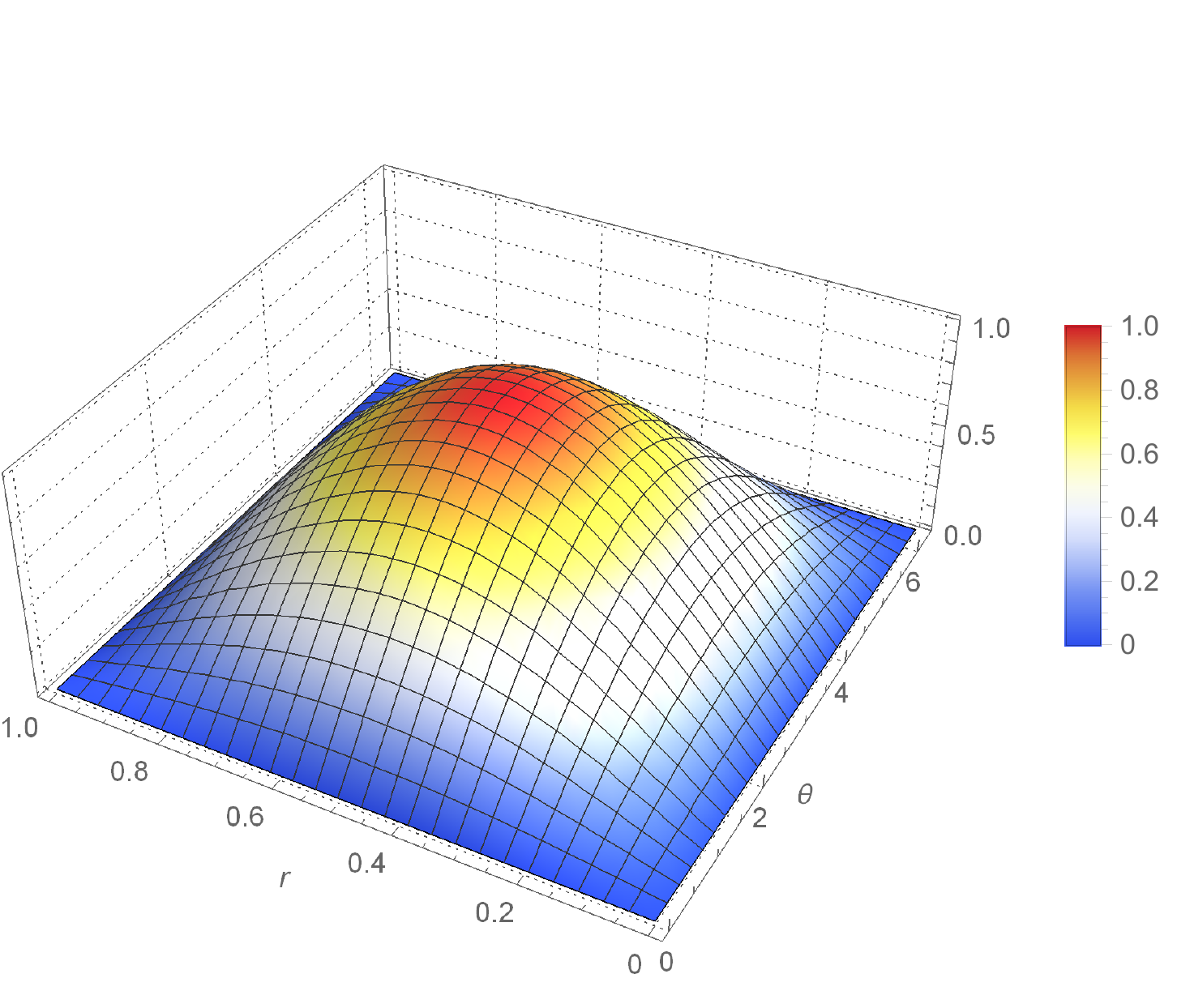} }}%
    \hspace{0.2cm}
    {{\includegraphics[width=6.1cm]{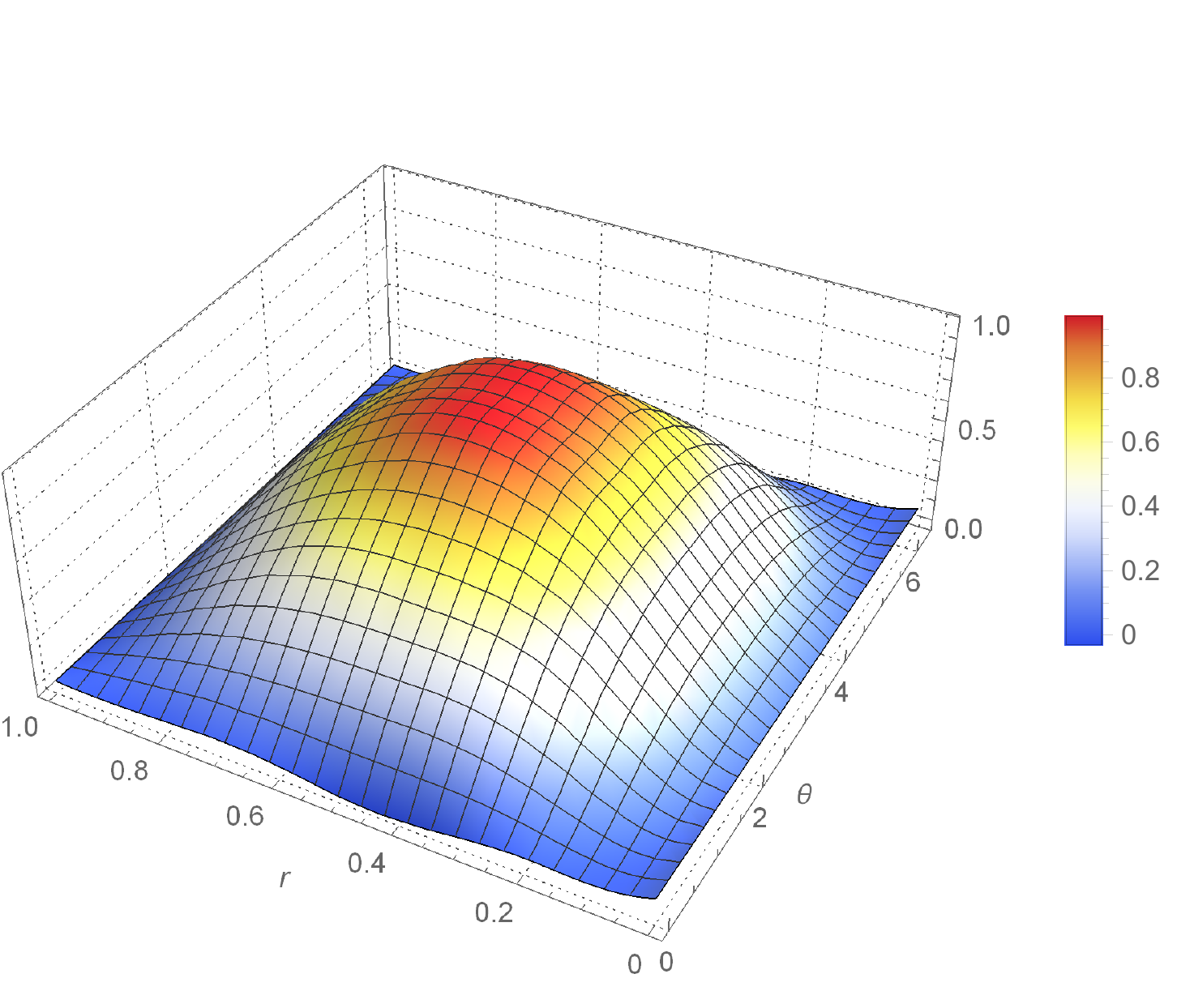} }}%
    \caption{Exact $g(r,\theta)$ (left) and recovered one from noisy data (right) for Example 1}%
    \label{fig11}%
\end{figure}

\begin{figure}[H] 
\centering
\includegraphics[scale=0.5]{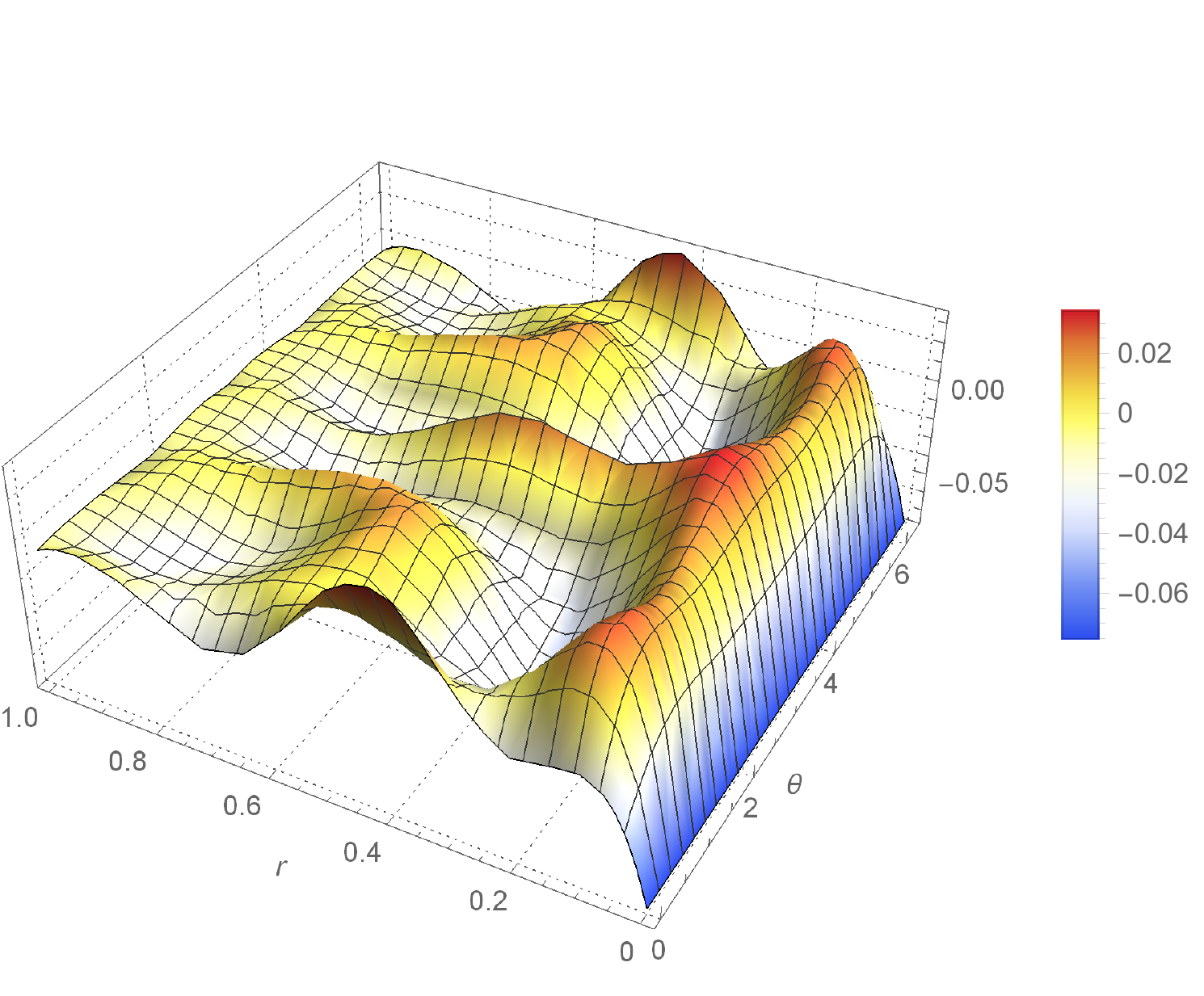}
\caption{Error surface for the noise level $p=1\%$.} \label{figerr11}
\end{figure}

\subsection*{Example 2}
We take the exact initial temperature
$$g(r,\theta)= [r(1-r)]^2 \cos \left(\frac{r\theta}{4}\right), \quad r\in (0,1), \; \theta\in [0,2\pi].$$

We set the initial iteration as $g_0=0$, the
regularization parameter $\varepsilon=10^{-8}$, and the stopping parameter $e_\mathcal{J}=2.92\times 10^{-7}$. The algorithm stops at iteration $n=60$ for a noise level $p=1\%$.

\begin{figure}[H]
    \centering
    {{\includegraphics[width=6.1cm]{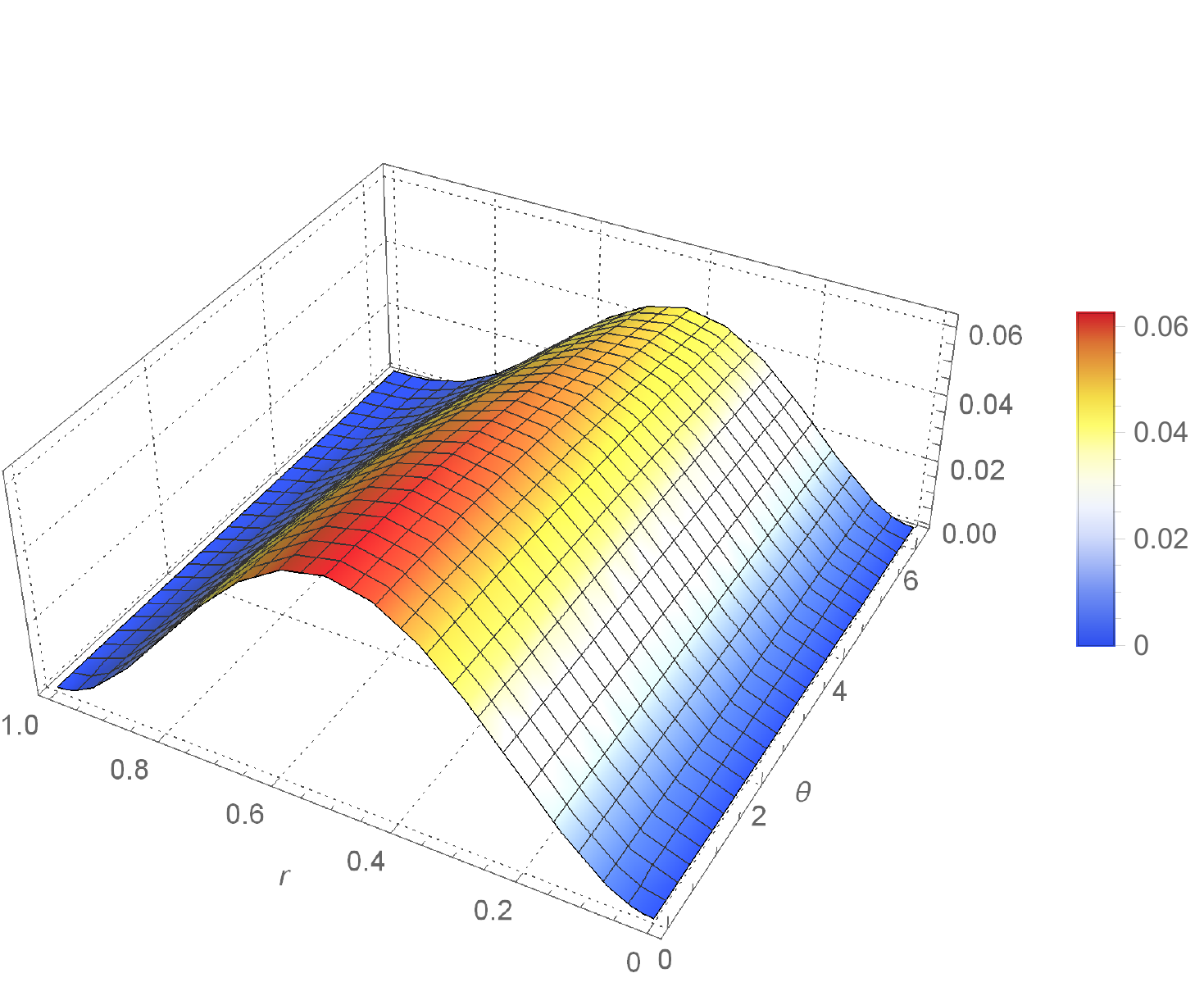} }}%
    \hspace{0.2cm}
    {{\includegraphics[width=6.1cm]{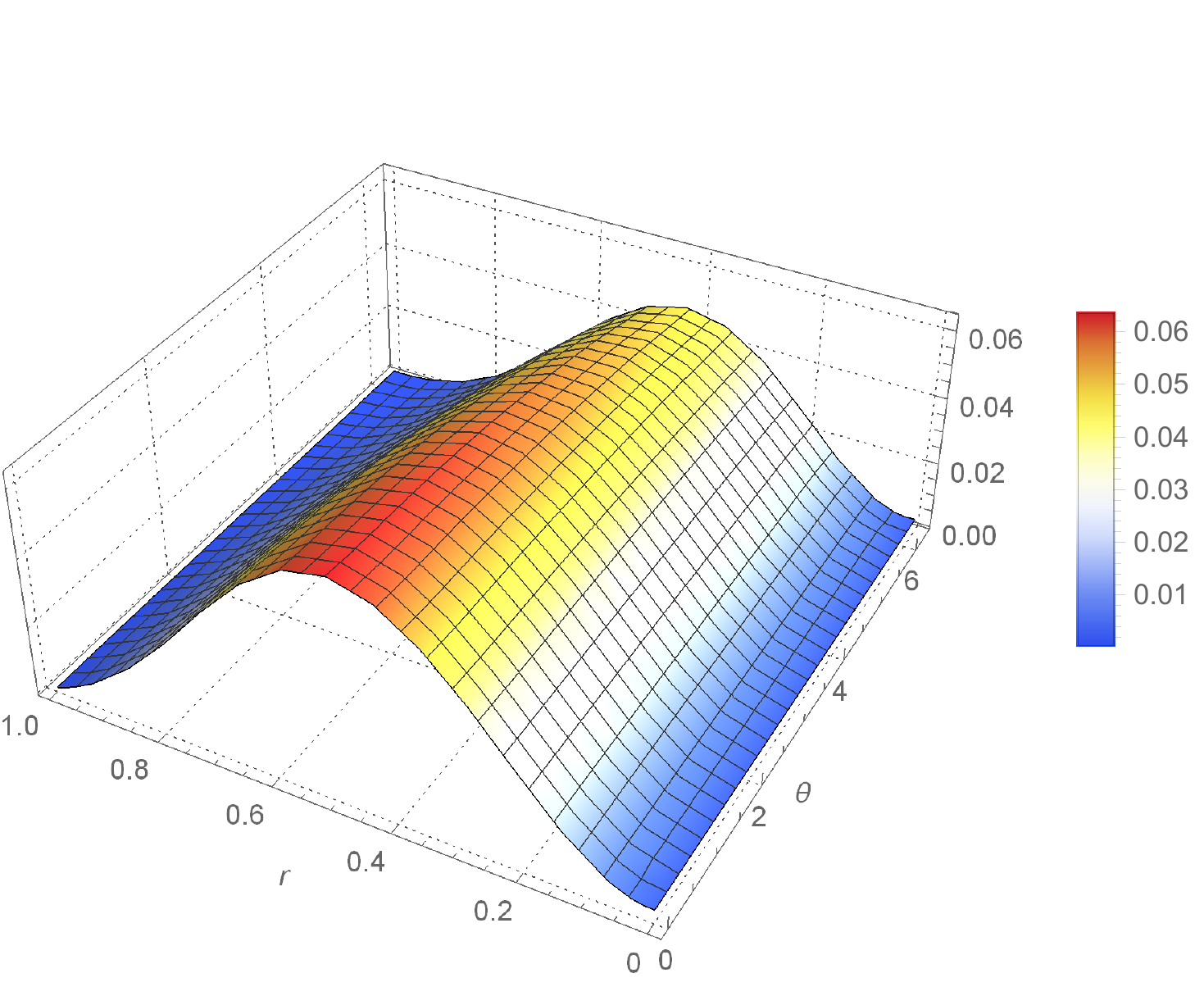} }}%
    \caption{Exact $g(r,\theta)$ (left) and recovered one from noisy data (right) for Example 2.}%
    \label{fig22}%
\end{figure}

\begin{figure}[H] 
\centering
\includegraphics[scale=0.5]{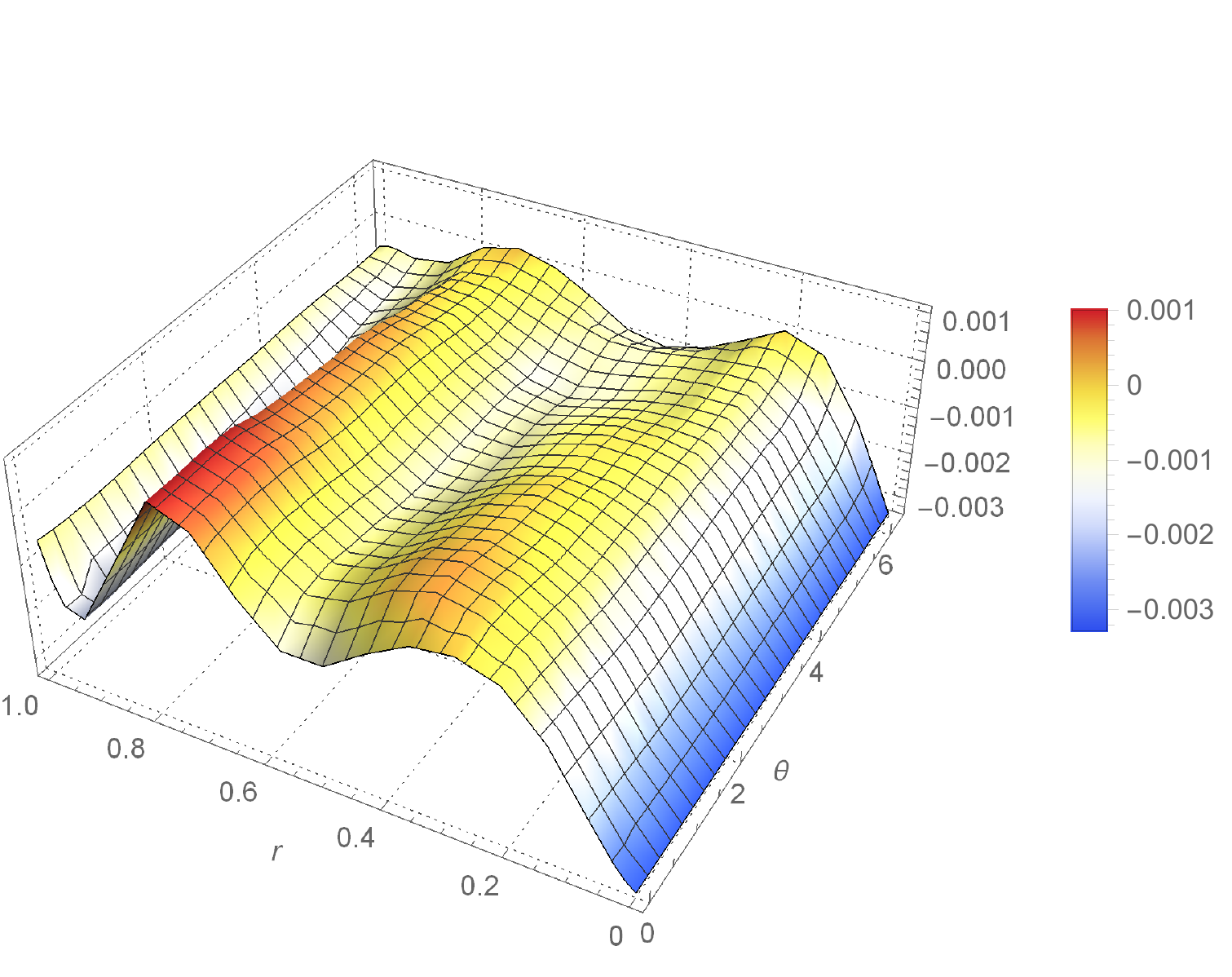}
\caption{Error surface for the noise level $p=1\%$.} \label{figerr22}
\end{figure}

\begin{remark}
In the previous experiments, the optimal regularization parameter $\varepsilon^*$ can be chosen in terms of the noise $\delta$ by the rule $\varepsilon<1$ and $\frac{\delta^2}{\varepsilon}<1$, while the optimal stopping parameter $e^*_\mathcal{J}$ can be estimated by the iteration number $n$ from the convergence the accuracy errors. See \cite{HR'21} for more details.
\end{remark}

\section{Conclusion}\label{sec6}
The inverse problem of identifying unknown initial temperatures in heat conduction models from the final time data is of interest in many engineering problems and industrial applications. Such models with static boundary conditions have been extensively studied and the theory is well developed, while papers dealing with heat models with dynamic boundary conditions are few and still at an earlier stage.

In this paper, we have investigated the inverse problem of recovering unknown initial temperatures of a heat equation with dynamic boundary conditions. Using an optimal control approach, the existence, uniqueness and stability of the minimizers are discussed for a general case of bounded domains $\Omega \subset \mathbb{R}^N$ ($N\geq 1$), and the numerical results are obtained by the conjugate gradient method for the 1-D and 2-D cases. The presented method has the advantage of yielding a monotone scheme and then fast numerical results. We expect that the proposed numerical approach is still applicable for such a problem in the 3-D case of system \eqref{eq1to4}, i.e. $N=3$. However, the geometric complexity of the domain $\Omega$ and the presence of the Laplace-Beltrami operator on the boundary will make the analysis more difficult. The study of such a problem would be of much interest.

\end{document}